\documentclass[10pt]{amsart}
\usepackage{graphicx}
 \usepackage[parfill]{parskip}
 \usepackage{amsmath}
\usepackage{amsthm}
\usepackage{amssymb}
\usepackage[T1]{fontenc}
 \newtheorem{theorem}{Theorem}
 
\newtheorem{lemma}[theorem]{Lemma}

\newtheorem{corollary}[theorem]{Corollary}

\def\a{\alpha}

\def\E{\mathsf {E}}

\def\d{\delta}

\def\F{{\mathbb F}}

\def\P{{\mathcal P}}

\def\u{\underline}
\def\supp{\mathrm{supp\, }}


\title{On the restriction problem for discrete paraboloid in lower dimension}

\author{Misha Rudnev}
\address{Misha Rudnev, Department of Mathematics, University of Bristol,
  Bristol BS8 1TW, United Kingdom}
\email{m.rudnev@bristol.ac.uk}

\author{Ilya D. Shkredov}
\address{Ilya D. Shkredov, Steklov Mathematical Institute, Division of Number Theory, ul. Gubkina, 8, Moscow, Russia, 119991 and IITP RAS, Bolshoy Karetny per. 19, Moscow, Russia, 127994 and  MIPT, Institutskii per. 9, Dolgoprudnii, Russia, 141701}
\email{ilya.shkredov@gmail.com}

\subjclass[2010]{42B05, 52C10}
\begin{document}
\begin{abstract} 
	We apply geometric incidence estimates in positive characteristic to prove the optimal $L^2 \to L^3$ Fourier extension estimate for the paraboloid in the four-dimensional vector space over a prime residue field. In three dimensions, when $-1$ is not a square,  we prove an $L^2 \to L^{\frac{32}{9} }$ extension estimate, improving the previously known exponent $\frac{68}{19}.$ \end{abstract}

\maketitle

\section{Introduction}
Let $F=\F_p$, the prime residue field of characteristic $p>2$, where $p$ is viewed as asymptotic parameter, consider the vector space $V=F^d$, $d\ge 3$, with the standard scalar product.  Define the paraboloid $\P$  as 
$$
	\mathcal{P} = \{ (\u x=(x_1,\ldots, x_{d-1}),\, \u x\cdot \u x) ~:~ \u x\in F^{d-1} \} = \{ (x_1,\ldots x_{d-1}, x^2_1 + \ldots +  x^2_{d-1}) \} \subset V \,.
$$
The Fourier extension problem is bounding some Lebesgue norm on $V$ of the inverse Fourier transform of a complex-valued function $f$ on  $\mathcal{P} \subset V^*$, the paraboloid in the Fourier space (or more generally some degree $2$ or higher irreducible codimension-one variety)  in terms of some Lebesgue norm of the function $f$ on $\mathcal{P}$. An equivalent question, with dual Lebesgue exponents, is bounding the norm of the restriction to $\mathcal P$ of the Fourier transform of a function $g$ on $V$ in terms of the norm of $g$.

We consider the specific case where one of the Lebesgue norms is $L^2$, which can be reduced nicely to geometric incidence combinatorics, and dimensions three and four. In $d=4$ we prove the optimal $L^2 \to L^3$ extension estimate. Optimality, in view of normalisation described below, is readily verified by calculating the inverse Fourier transform of the characteristic function of the lift of an  isotropic line in the base space $F^3$ on $\mathcal{P}$. Indeed,  the paraboloid $\mathcal P$  contains, in particular, the {\em null sphere} -- alias {\em isotropic cone} -- that is the set $S_0\times\{0\}\subset F^4$, where 
$$S_0 = \{\u x \in F^3:\, \u x\cdot \u x =0\}.$$ Clearly, $S_0$ is a cone, for if $\u x\in S_0$, then so is its multiple by a scalar. Such a three-vector $\u x$ is called {\em isotropic}, as well as any line, whose direction vector is isotropic. By non-degeneracy of the dot product, if $\u x,\,\u y$ are nonzero isotropic vectors in $F^3$, with $\u x\cdot \u y=0$, then one is a scalar multiple of the other.

In $d=3$ two isotropic directions in the base space $F^2$ exist if $-1$ is a square in $F$, that is $p\equiv 1 \pmod 4$ and do not exist if $p\equiv  3 \pmod 4$. In the former case, by taking the inverse Fourier transform of an isotropic line, one concludes that the best possible extension Fourier exponent from $L^2 $ is $4,$ which is also the Stein-Tomas exponent (which means that it follows from the basic Fourier estimates \eqref{f:g_hat_2} and \eqref{f:g_hat_3} alone). However, if $-1$ is not a square, the best possible  extension exponent from $L^2 $ is an open question. Mockenhaupt and and Tao \cite{MoT} proved a threshold  $L^2 \to L^{\frac{18}{5}}$  estimate (but for the endpoint) and conjectured that the best exponent should be $3$. Their exponent $\frac{18}{5}$ was improved to $\frac{18}{5}-\delta$, for  $0<\delta<\frac{1}{1035}$ by Lewko \cite{Lew}. Stevens and de Zeeuw \cite{SdZ} remark that their new incidence theorem -- presented here as Lemma \ref{lem:2d} -- would justify $0<\delta<\frac{2}{95}$ in  Lewko's argument, thus claiming the exponent $\frac{68}{19}$. Here we push it a little further, namely  $0<\delta\leq \frac{2}{45}$, proving an
$L^2 \to L^{\frac{32}{9}}$ extension estimate, owing to a more meticulous application of the Stevens-de Zeeuw theorem. 

The restriction problem over  the real field has a reputed history, which we do not aim to present; since the 2000s, after having been set up by Mockenhaupt and Tao \cite{MoT}, the question has also been studied in the finite field setting. Introduction-level discussion of the discrete paraboloid case can be found in Green's lecture notes \cite[Sections 6--9]{Green}. Heuristically, restriction phenomena are closely linked to geometric incidence laws, governing intersections of lower-dimensional affine subspaces in a vector space.  Recent  progress is due to  Lewko \cite{Lew}, \cite{Lewko}, as well as Iosevich, Koh and Lewko \cite{IKL}. These works -- see especially \cite{Lew}, \cite{Lewko} -- also introduce the subject at length and breadth and contain exhaustive reference lists.  Remarkably, in \cite{IKL}  optimal extension/restriction estimates for the paraboloid in even dimensions $\geq 6$ (but for the endpoint  exponent in dimension $6$) were established by using a somewhat crude geometric incidence machinery based entirely on discrete Fourier analysis, involving  Gauss and Kloosterman sum estimates. By somewhat crude we mean, heuristically, that this machinery works well, provided that the sets of geometric objects involved are sufficiently large in terms of the cardinality of the finite field. This approach  in \cite{IKL} turns out to be powerful enough to have led to optimal estimates in higher dimension, however in dimension $4$ it only allows for a partial result.

In this note, if the dimension $d=4$, we use sharper incidences results from the first author's paper \cite{Rud}, which turn out to be ideally suited to study the restriction problem on the paraboloid in this dimension. This allows us to settle the question. We must admit in comparison that the methods of \cite{IKL}-\cite{Lewko} work in any finite field $\F_q$ of odd characteristic, while we are forced to confine ourselves to $F=\F_p$. The obstacle is that the characteristic $p$ does appear in the statement of the point-plane incidence theorem, Lemma \ref{lem:Misha+}, that we fetch from the first author's work \cite{Rud} -- see the latter paper for discussion why replacing (under some constraint) $p$ by $q$ is likely to be a difficult structural problem.

The argument in dimension $3$ follows roughly the same lines if instead of  Rudnev's point-plane incidences theorem one uses the point-line Szemer\'edi-Trotter type incidence theorems in $F^2$ due to Stevens and de Zeeuw \cite{SdZ}. The fact that the theorem is useful for the restriction problem  was observed in the latter paper, claiming the extension exponent $\frac{68}{19}$. Here we develop a more elaborate way of applying the latter point-line incidence bound along the lines of the well-known, and sharp, application of the Szemer\'edi-Trotter theorem to the repeated angle in the plane bound, developed by Pach and Sharir \cite{PS}.

We remark that the Stevens-de Zeeuw theorem is in some sense a corollary of Rudnev's theorem, which in turn can be viewed as an adaptation of the breakthrough development of the polynomial method by Guth and Katz \cite{GK} (at this point we stop the genealogical detour) which has also inspired progress in the restriction problem over the reals, due to Guth \cite{Guth1}, \cite{Guth2}. 

As far as the notation is concerned, we follow \cite{IKL}, using the counting norm on $V$, thus defining the Fourier transform of a function $g:V\to \mathbb C$ as
$$
\hat g(\xi) := \sum_{x\in V} g(x) e(-x\cdot \xi)\,, 
$$
where $e$ is a non-trivial additive character. In the inverse Fourier transform summation is replaced by taking the expectation. As in the real prototype of the question, for a function $f$ on $\mathcal P,$ its inverse Fourier transform is denoted/defined as
$$
(fd\sigma)^\lor(x) := \frac{1}{p^{d-1}} \sum_{\xi\in \mathcal P} f(\xi) e(x\cdot\xi)\,,
$$
the notation $d\sigma$ standing for the normalised ``surface area'' on $\mathcal P$, assigning to each point on $\mathcal P$ the mass $|\mathcal P|^{-1}=1/p^{d-1}$ ($|\cdot|$ standing in particular, as usual, for finite cardinality).

Thus a Lebesgue $L^q$-norm of $f$ on $\mathcal P$ is defined as
$$
\|f\|_{L^q(\mathcal P,d\sigma)}:= \left( \frac{1}{p^{d-1}} \sum_{\xi\in \mathcal P} |f(\xi) |^q \right)^{\frac{1}{q}}\,,
$$
while for a function $g$ on $V$ it is 
$$
\|g\|_{L^q(V)}:= \left( \sum_{x\in V} |g(x) |^q \right)^{\frac{1}{q}}\,.
$$
Finally, we use the symbols $\ll$ (as well as $O(\,)$) and $\gg$ in the standard way to suppress absolute constants, as well as $\sim$ for an approximate equality of two quantities within a constant factor.

Our main results are as follows.
\begin{theorem} Let $f$ be a function on $\mathcal P\subset \mathbb F_p^4$. Then 
$$
\|(fd\sigma)^\lor \|_{L^3(\F_p^4)} \ll \|f\|_{L^2(\mathcal P,d\sigma)}\,.
$$
\label{t:rest}\end{theorem}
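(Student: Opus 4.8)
The plan is to dualise the estimate and run a level-set argument, converting it into a point--plane incidence count in $\F_p^3$ to which Lemma~\ref{lem:Misha+} applies. Since $\|(fd\sigma)^\lor\|_{L^3(\F_p^4)}\ll\|f\|_{L^2(\mathcal P,d\sigma)}$ is adjoint to a restriction inequality, a $TT^*$-type argument shows it suffices to control, for each $\lambda>0$, the super-level set $E_\lambda=\{x\in\F_p^4:\,|(fd\sigma)^\lor(x)|>\lambda\}$. Pairing $(fd\sigma)^\lor$ against a function $\omega$ that is unimodular on $E_\lambda$ and zero off it, and using extension/restriction adjointness, one obtains
\[
\lambda^2|E_\lambda|^2\ \le\ \|f\|_{L^2(\mathcal P,d\sigma)}^2\;\sum_{x,y\in E_\lambda}\omega(x)\overline{\omega(y)}\,\widehat{d\sigma}(x-y),
\]
using the identity $\sum_{\xi\in\mathcal P}|\widehat\omega(\xi)|^2=p^3\sum_{x,y}\omega(x)\overline{\omega(y)}\widehat{d\sigma}(x-y)$. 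Everything then rests on estimating this quadratic form.

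Completing the square in the Gauss sum for $\widehat{d\sigma}$ gives $\widehat{d\sigma}(0)=1$, $\widehat{d\sigma}(z)=0$ for $z=(z',0)$ with $z'\ne0$, and $\widehat{d\sigma}(z)=p^{-3/2}\gamma(z_4)\,e\!\big(|z'|^2/(4z_4)\big)$ with $|\gamma|\equiv1$ when $z_4\ne0$, where $z=(z',z_4)\in\F_p^3\times\F_p$. The diagonal $x=y$ contributes exactly $|E_\lambda|$; the off-diagonal is $p^{-3/2}$ times an oscillatory sum over pairs with $x_4\ne y_4$ carrying the ``light-cone'' phase $|x'-y'|^2/(x_4-y_4)$. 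Majorising that sum by $p^{-3/2}|E_\lambda|^2$ merely reproduces the Stein--Tomas exponent $\tfrac{10}{3}$, so the whole point is to beat it by exploiting the oscillation.

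To do so I would drop $\omega$ by the triangle inequality and apply Cauchy--Schwarz in one of the variables; after a dyadic pigeonholing of the last coordinates the relation $|x'-y'|^2/(x_4-y_4)=\mathrm{const}$ becomes affine (it linearises under the lift $x'\mapsto(x',|x'|^2)$), so that the surviving object is the number of incidences between $\ll|E_\lambda|$ points and $\ll|E_\lambda|$ planes in $\F_p^3$. Lemma~\ref{lem:Misha+}, together with a bound on the number of collinear points (trivially $p$, or less after a further sub-level decomposition), then yields an estimate for the off-diagonal that strictly improves on $p^{-3/2}|E_\lambda|^2$. One splits by size: the diagonal dominates for $|E_\lambda|$ small, the trivial Plancherel bound $\sum_{\xi\in\mathcal P}|\widehat\omega(\xi)|^2\le p^4|E_\lambda|$ already suffices once $|E_\lambda|\gtrsim p^3$, and the incidence bound is used in the intermediate range, producing in every regime a bound on $|E_\lambda|$ which is sharp exactly on configurations arising from lifts of isotropic lines.

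Feeding this into $\|(fd\sigma)^\lor\|_{L^3}^3=3\int_0^\infty\lambda^2|E_\lambda|\,d\lambda$ and optimising the split should give $\|(fd\sigma)^\lor\|_{L^3(\F_p^4)}\ll\|f\|_{L^2(\mathcal P,d\sigma)}$. I expect two points to be delicate, and the first to be the main obstacle: (i) arranging the Cauchy--Schwarz and the pigeonholing so that the incidence count coming out is \emph{precisely} the one Lemma~\ref{lem:Misha+} handles optimally --- in particular handling the degenerate (collinear/coplanar) sub-families carefully, which is where earlier applications of this machinery conceded efficiency --- so that one lands on the sharp exponent $3$ rather than an intermediate value; and (ii) closing the endpoint in the $\lambda$-integral, since restricted strong type genuinely fails for every $q<3$ (the lift of an isotropic line is an exact obstruction), so one must verify that the borderline profile $|E_\lambda|\sim\lambda^{-3}$ can occur only over a $\lambda$-window too short to create a logarithmic loss, or else remove such a loss by an $\varepsilon$-removal argument. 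The explicit appearance of the characteristic $p$ in Lemma~\ref{lem:Misha+} is exactly what ties the whole argument to prime fields rather than general $\F_q$.
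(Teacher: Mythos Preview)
Your overall framework --- dualise, pass to level sets, feed into Lemma~\ref{lem:Misha+} --- matches the paper's, but the crucial middle step, where you convert the oscillatory quadratic form into a point--plane incidence count, is not a proof as written, and the parameters you quote are not the ones that arise. You say that after ``dropping $\omega$ by the triangle inequality'' and one Cauchy--Schwarz plus a pigeonhole on the last coordinate, the phase condition $|x'-y'|^2/(x_4-y_4)=\mathrm{const}$ linearises to give ``$\ll|E_\lambda|$ points against $\ll|E_\lambda|$ planes in $\F_p^3$''. But dropping $\omega$ by the triangle inequality already kills the oscillation and returns you to the trivial Stein--Tomas bound; and a single Cauchy--Schwarz on a bilinear oscillatory sum does not collapse a unimodular phase into a Boolean incidence relation --- it introduces extra variables. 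The paper does not attempt this direct route. It instead invokes Lemma~\ref{l:g^-energy} (the Mockenhaupt--Tao reduction), which via a Gauss-sum evaluation and a H\"older step bounds $\|\hat g\|_{L^2(\mathcal P,d\sigma)}$ by $|G|^{1/2}$ plus a term involving $\sum_h \E^{1/4}(G_h)$, the additive energies of the \emph{horizontal slices} $G_h\subseteq\mathcal P$ of the support. Only at that stage does incidence geometry enter: each $\E(G_h)$ counts rectangles in $\F_p^3$ (equation~\eqref{f:inc_d-1}), which is an incidence problem of $|G_h|$ points against $|G_h|^2$ planes --- not $|E_\lambda|$ against $|E_\lambda|$ --- and it is this asymmetry that makes Lemma~\ref{lem:Misha+} land on the right exponent.

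The second omission is the content of Lemma~\ref{l:par_energy}, which is where most of the paper's work sits. A direct application of Lemma~\ref{lem:Misha+} to the rectangle count yields $\E(A)\ll|A|^3/p+|A|^{5/2}+k|A|^2$ with $k$ the maximal collinearity of $\u A$; removing the $k$-term is the ``degenerate sub-families'' issue you flag but do not resolve. The paper excises the rich lines, bounds $\E(A_l,A)$ for each non-isotropic rich line $l$ by a direct quadratic-equation argument (estimate~\eqref{mix}), and treats the isotropic contribution separately to arrive at~\eqref{f:E_par_lemma}. Finally, your endpoint worry (ii) turns out to be a non-issue: once Corollary~\ref{cor:g_hat_bounds} is in hand, the dyadic sum~\eqref{long} is geometrically convergent in every range and delivers the endpoint $L^3$ outright, with no $\varepsilon$-removal needed.
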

\begin{theorem} Let $f$ be a function on $\mathcal P\subset \mathbb F_p^3$ and $p\equiv 3 \pmod 4$. Then 
$$
\|(fd\sigma)^\lor \|_{L^{\frac{32}{9}}(\F_p^3)} \ll \|f\|_{L^2(\mathcal P,d\sigma)}\,.
$$
\label{t:rest3}\end{theorem}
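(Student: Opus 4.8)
The plan is to run the scheme of Mockenhaupt--Tao \cite{MoT} and Lewko \cite{Lew}, but to replace its incidence input by a Pach--Sharir-type reduction of an additive energy to a Szemer\'edi--Trotter count, which is then fed into Lemma \ref{lem:2d}. First I would perform the standard reduction: by duality and dyadic pigeonholing, the estimate $\|(fd\sigma)^\lor\|_{L^{32/9}(\F_p^3)}\ll\|f\|_{L^2(\P,d\sigma)}$ is deduced from level-set inequalities for $f=\mathbf 1_A$, $A\subseteq\P$. The kernel $\widehat{d\sigma}$ is a delta on $\{x_3=0\}$ plus, off that hyperplane, a product of one-dimensional quadratic Gauss sums of modulus exactly $\sqrt p$ --- here $p\equiv3\pmod 4$ is used, to exclude degeneracy. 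As in \cite{MoT}, \cite{Lew} this turns the problem into controlling, for $A\subseteq\P$ with projection $\u A\subseteq\F_p^2$ and $|\u A|=|A|=:N$, the fourth moment
\[
\|(\mathbf 1_A d\sigma)^\lor\|_{L^4(\F_p^3)}^4=p^{-5}\,\E(A),
\]
where $\E(A)$ is the additive energy of $A$ in $\F_p^3$, together with the elementary observation that sets $A$ lying on a parabola --- the lift of a line of $\F_p^2$, which again since $p\equiv3\pmod4$ is a non-degenerate parabola --- are harmless, as then $(\mathbf 1_A d\sigma)^\lor$ is a one-dimensional quadratic Gauss sum in the parametrising variable. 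Thus only the ``spread-out'' sets obstruct the passage below $r=4$, and everything hinges on a bound for $\E(A)$ that degrades gracefully as $\u A$ concentrates onto lines.

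The new ingredient is the Pach--Sharir reduction of $\E(A)$. Parametrise the quadruples $\xi_1+\xi_2=\xi_3+\xi_4$ in $A$ by the common difference $\u u=\u x_1-\u x_3=\u x_4-\u x_2$; the quadratic constraint $|\u x_1|^2+|\u x_2|^2=|\u x_3|^2+|\u x_4|^2$ collapses to $\u u\cdot(\u x_2-\u x_3)=0$, so that, up to $O(N^2)$ degenerate terms, $\E(A)$ counts ``rectangles'' $\{\u p,\u p+\u u,\u p+\u w,\u p+\u u+\u w\}\subseteq\u A$ with $\u u,\u w\ne\u 0$, $\u u\cdot\u w=0$. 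Fixing an ordered pair of points of $\u A$ as one side and bounding the choices of the opposite side by Cauchy--Schwarz over the $p+1$ directions --- where one uses that, since $p\equiv3\pmod4$, the involution $\delta\mapsto\delta^{\perp}$ on $\mathbb P^1(\F_p)$ has no fixed points, so the rectangle is genuinely two-dimensional --- gives
\[
\E(A)\ll N^2+T(\u A),\qquad T(\u A):=\sum_{\ell}|\u A\cap\ell|^{3}=\#\{\text{collinear triples in }\u A\}.
\]
This is exactly the manoeuvre of Pach and Sharir \cite{PS} bounding a repeated-angle count by a collinear-triple count, and it is where a point--line incidence theorem enters.

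Next I would estimate $T(\u A)$ and set up a dichotomy. Decompose the lines dyadically by richness and apply the Stevens--de Zeeuw bound (Lemma \ref{lem:2d}) wherever its hypotheses hold --- those hypotheses constrain $N$ in terms of $p$, which is precisely why the argument is confined to $\F_p$. If no line of $\F_p^2$ meets $\u A$ in more than $\mu^{*}$ points, one obtains $T(\u A)\ll N^{11/4}+N(\mu^{*})^{2}$, hence an acceptable bound on $\E(A)$; if instead a positive proportion of $\u A$ lies on lines each meeting $\u A$ in $\gg\mu^{*}$ points, one peels those lines off and treats the corresponding part of $A$ --- which lives on few parabolas --- via the one-dimensional estimate above, iterating if necessary. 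Putting the two cases back through the reduction and optimising over $\mu^{*}$ and the dyadic cut-offs yields the exponent; balancing the Stevens--de Zeeuw exponent $11/15$ against the Gauss-sum contributions is the ``more meticulous'' optimisation that turns the previously claimed $\frac{68}{19}$ of \cite{SdZ} into $\frac{32}{9}=\frac{18}{5}-\frac{2}{45}$.

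The main obstacle, I expect, is this last step rather than any single inequality: one must carry the validity window of Lemma \ref{lem:2d} throughout, simultaneously dispatch the trivial range of small $A$, the large range handled by Gauss sums, and the delicate middle range, and tune the rich/poor threshold and all dyadic parameters so as to extract the largest admissible $\d$. It is exactly this bookkeeping that keeps the exponent at $\frac{32}{9}$ rather than the conjectural $3$ --- the latter would follow from $T(\u A)\ll N^{2+o(1)}$ in the full range, i.e. from an optimal Szemer\'edi--Trotter inequality over $\F_p$.
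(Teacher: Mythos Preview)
Your overall architecture is right --- duality, reduction to additive energy of sets on $\P$, identification of $\E(A)$ with a rectangle count in the base $\u A\subset\F_p^2$, and a Pach--Sharir--type estimate fed by the Stevens--de Zeeuw incidence bound. But the precise way you execute the Pach--Sharir step is too crude to reach $\tfrac{32}{9}$, and this is a genuine gap rather than bookkeeping.

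You apply Cauchy--Schwarz (equivalently AM--GM via the fixed-point-free involution $\ell\mapsto\ell^{\perp}$) across \emph{all} lines to obtain $\E(A)\ll N^{2}+T(\u A)$ with $T(\u A)=\sum_{\ell}|\u A\cap\ell|^{3}$, and then bound $T(\u A)\ll N^{11/4}+N(\mu^{*})^{2}$ from Lemma~\ref{lem:2d}. Even in the favourable ``spread'' case this gives only $\E(A)\ll N^{11/4}$; your rich-line peeling and Gauss-sum treatment do not improve the spread bound. Pushed through Lemma~\ref{l:g^-energy} (or through the $L^4$ route), an energy exponent $\tfrac{11}{4}$ yields an extension exponent strictly worse than $\tfrac{32}{9}$ --- in fact worse than the $\tfrac{68}{19}$ already claimed in \cite{SdZ}.

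The paper's point is that one must insert the richness cutoff \emph{before} Cauchy--Schwarz. Writing $\E(A)=\sum_{\u z\in\u A}\sum_{\ell\ni\u z}n(\ell)\,n(\ell^{\perp})$ and splitting at a threshold $k_{*}$, the ``poor'' part (one of $\ell,\ell^{\perp}$ carries $\le k_{*}$ points) is bounded trivially by $k_{*}N^{2}$; only on the ``rich'' part does one apply Cauchy--Schwarz, landing on the collinear-triple count restricted to $k_{*}$-rich lines, which Stevens--de Zeeuw bounds by $\ll N^{11/4}k_{*}^{-3/4}$ (plus a very-rich tail handled separately). Optimising at $k_{*}=N^{3/7}$ gives $\E(A)\ll N^{17/7}$ for $N\le p^{26/21}$ (Lemma~\ref{l:corners}), and it is this exponent $\tfrac{17}{7}$ --- not $\tfrac{11}{4}$ --- that, fed into Lemma~\ref{l:g^-energy} and Corollary~\ref{cor:g_hat_bounds2d}, produces $\tfrac{32}{9}$. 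Your dichotomy with peeling rich lines to parabolas is unnecessary once one has this unconditional energy bound; conversely, without the pre-Cauchy--Schwarz cutoff your scheme cannot be tuned to $\tfrac{32}{9}$.

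A smaller point: your closing sentence asserts that $T(\u A)\ll N^{2+o(1)}$ would yield the conjectural extension exponent $3$. In the framework used here this is not so; as the paper's final remark explains, even the full Szemer\'edi--Trotter input (hence $\E(A)\ll N^{2}\log N$) only pushes the method to an exponent $>\tfrac{10}{3}$.
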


\section{Preliminaries} 

For a set $G\subseteq V$, with the fourth coordinate $h\in F$, define $G_h \subseteq \mathcal{P}$ as the horizontal $h$-slice of $G$, lifted to $\mathcal P$, that is
$$
G_h := \{ (\u x,\u x\cdot \u x): \,(\u x,h)\in G\}\,.
$$

For a finite set $X$ in an abelian group, we define the usual additive energy as
\begin{equation}\label{energy}
\E(X) = |\{(x,y,z,u)\in X^4:\,x+y=z+u\}|\,.\end{equation}

We begin with the key preliminary consideration, originating in \cite{MoT}, which  reduces restriction bounds to estimating additive energy of sets on $\mathcal P$. We essentially quote \cite[Lemma 2.1]{IKL}, where a thorough sketch of the proof  is given. See \cite[Corollary 25, Lemma 29]{Lewko} for more details and references as to the claim. 

\begin{lemma}
	\label{l:g^-energy}
	Let $g: V \to \mathbb{C}$ be a function such that $\| g\|_\infty \leq 1$ on its support $G$. 
	Then
	\begin{equation} \label{e:g^-energy}
	\| \hat{g} \|_{L^2 (\P,d\sigma)} \ll |G|^{\frac{1}{2}} + |G|^{\frac{3}{8}} 
	p^{-\frac{d-2}{8}}
	\left( \sum_{h\in F} \E^{\frac{1}{4}} (G_h) \right)^{\frac{1}{2}} \,.
	\end{equation}
\end{lemma}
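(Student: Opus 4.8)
The plan is to follow the standard Mockenhaupt--Tao dyadic pigeonholing scheme that reduces an $L^2$ Fourier bound on $\mathcal P$ to an additive energy count, being careful with the normalisation conventions fixed in the introduction. First I would expand
$$
\|\hat g\|_{L^2(\mathcal P,d\sigma)}^2 = \frac{1}{p^{d-1}} \sum_{\xi\in\mathcal P} |\hat g(\xi)|^2 = \frac{1}{p^{d-1}} \sum_{\xi\in\mathcal P} \sum_{x,y\in G} g(x)\overline{g(y)}\, e(-(x-y)\cdot\xi)\,,
$$
and interchange the order of summation, so that the inner sum over $\xi\in\mathcal P$ is the (conjugated) Fourier transform $\widehat{d\sigma}(y-x)$ of the surface measure, which splits the contribution into a ``diagonal'' piece where $x-y$ lies in the annihilator of $\mathcal P$ (giving the main term of size $\ll |G|$, whence the $|G|^{1/2}$ summand) and an ``off-diagonal'' piece controlled by the decay of $\widehat{d\sigma}$. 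The decay estimate $|\widehat{d\sigma}(\eta)| \ll p^{-(d-1)} + p^{-\frac{d-1}{2}}\cdot(\text{something supported on a quadric})$ for $\eta\neq 0$ — a standard Gauss-sum computation for the paraboloid — is what feeds the exponent $p^{-\frac{d-2}{8}}$ after the interpolation below; I would quote it rather than rederive it.

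Next comes the dyadic decomposition: write $g = \sum_j g_j$ where $g_j$ is the restriction of $g$ to the level set where $|g|\sim 2^{-j}$, but since $\|g\|_\infty\le 1$ on $G$ it suffices to run the argument for $g = \mathbf 1_G$ up to a logarithmic loss, or more cleanly to bound $\|\hat g\|_{L^2(\mathcal P,d\sigma)}$ by $\|\widehat{\mathbf 1_G}\|_{L^2(\mathcal P,d\sigma)}$ directly once one observes $|\hat g|\le \widehat{\mathbf 1_G}$ is false pointwise but the $L^2$ bound survives because the bilinear form is positive-definite. The heart of the matter is to bound $\frac{1}{p^{d-1}}\sum_{\xi\in\mathcal P}|\widehat{\mathbf 1_G}(\xi)|^2$ by relating it, via the Cauchy--Schwarz / tensor-power trick, to a fourth moment: one has
$$
\Big(\frac{1}{p^{d-1}}\sum_{\xi\in\mathcal P}|\widehat{\mathbf 1_G}(\xi)|^2\Big)^2 \ll \frac{|G|}{p^{d-1}}\cdot\frac{1}{p^{d-1}}\sum_{\xi\in\mathcal P}|\widehat{\mathbf 1_G}(\xi)|^4 + (\text{main term})^2,
$$
and the fourth moment unfolds into a weighted count of additive quadruples in $G$ that, after summing over the horizontal slices $h\in F$, is governed by $\sum_h \E(G_h)$ (the cross terms between different slices being absorbed by the $\widehat{d\sigma}$-decay since two points of $\mathcal P$ in distinct horizontal slices are never congruent modulo the annihilator). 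Then I would apply Cauchy--Schwarz in $h$ to pass from $\sum_h \E(G_h)$ to $\big(\sum_h \E^{1/4}(G_h)\big)$ at the cost of the power adjustments that produce the exponents $\tfrac38$ and $\tfrac14$ in \eqref{e:g^-energy}.

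The step I expect to be the main obstacle — or at least the one demanding the most care — is getting the book-keeping of the dyadic pigeonholing and the exponents exactly right: one must interpolate between the trivial $L^2$ bound (contributing $|G|^{1/2}$) and the fourth-moment/energy bound in such a way that the final exponent on $|G|$ is $\tfrac38$ and the power of $p$ is $-\tfrac{d-2}{8}$, and the $\tfrac14$-th power of $\E(G_h)$ inside the $h$-sum comes from combining the fourth-moment unfolding ($\E$ appears to the first power) with the square-root from the bilinear form and a further Cauchy--Schwarz over the $\sim p$ slices. Since this is quoted essentially verbatim from \cite[Lemma 2.1]{IKL} with a thorough sketch in \cite[Corollary 25, Lemma 29]{Lewko}, I would present the reduction at the level of detail above and refer to those sources for the routine but delicate constant-chasing, flagging only the point where the characteristic enters (the Gauss-sum decay of $\widehat{d\sigma}$) since that is the sole place $p$ rather than a general $q$ matters, consistently with the remark in the introduction.
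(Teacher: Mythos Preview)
The paper does not prove this lemma: it states it as a preliminary result and refers the reader to \cite[Lemma 2.1]{IKL} and \cite[Corollary 25, Lemma 29]{Lewko} for the argument. Your plan to give a high-level sketch and then defer to those same sources is therefore exactly what the paper does, and the broad outline you describe --- expand the $L^2$ norm, isolate the main term via the Gauss-sum formula for $(d\sigma)^\vee$, pass to a fourth moment that unfolds into energies of horizontal slices, and apply H\"older over $h$ --- is the correct shape of the argument.

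Two points in your sketch are wrong, however, and should be removed rather than deferred. First, the shortcut ``the $L^2$ bound survives because the bilinear form is positive-definite'' is false: positive-definiteness of the kernel $K(x,y)=(d\sigma)^\vee(y-x)$ does \emph{not} imply $\langle g, Kg\rangle \le \langle \mathbf 1_G, K\mathbf 1_G\rangle$ whenever $|g|\le \mathbf 1_G$ pointwise (a $2\times 2$ positive-definite matrix with a negative off-diagonal entry already gives a counterexample). The actual argument in \cite{IKL}, \cite{Lewko} keeps $g$ throughout and invokes $\|g\|_\infty\le 1$ together with H\"older at the appropriate moment; you should not attempt to replace $g$ by $\mathbf 1_G$ this way. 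Second, your closing remark that the Gauss-sum step is ``the sole place $p$ rather than a general $q$ matters'' contradicts the paper: the introduction explicitly says that the methods of \cite{IKL}--\cite{Lewko}, and hence this lemma, work over any $\F_q$ of odd characteristic, and that the prime-field restriction in the present paper enters only through the incidence theorems (Lemma~\ref{lem:Misha+} and Lemma~\ref{lem:2d}), not through the Gauss sums.
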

Note that in the left-hand side of \eqref{e:g^-energy} the paraboloid $\mathcal P$ lives in the Fourier space $V^*$, while in the right-hand side 
the sets $G_h$ arise on $\mathcal P\subset V$.
This can be seen by inspection of the proof of Lemma \ref{l:g^-energy}, which uses Gauss sums and the H\"older inequality.  In addition, we will use two more basic Stein-Tomas type estimates, also obtained in this vein for $\| \hat{g} \|_{L^2 (\P,d\sigma)}$, see \cite[formulae (1.6), (1.7)]{IKL}:
\begin{equation}\label{f:g_hat_2}
	\| \hat{g} \|_{L^2 (\P,d\sigma)} \ll |G|^{\frac{1}{2}} + |G| p^{-\frac{d-1}{4}} 
\end{equation}
	and
\begin{equation}\label{f:g_hat_3}
	\| \hat{g} \|_{L^2 (\P,d\sigma)} \ll p^{\frac{1}{2}} |G|^{\frac{1}{2}} \,.
\end{equation}

\section{Proof of Theorem \ref{t:rest}}
In this this section we set $d=4$, so $V=F^4$. For a vector $x = (x_1,\dots, x_4)\in V$ we write $x=(\u x,h)$, so $\underline{x}=(x_1,x_2,x_3)$ and $h=x_4$.

We complement the bounds in the previous section by a key incidence bound in the next lemma, as follows.
\begin{lemma}
\label{l:par_energy} 
	Let $A\subseteq \mathcal{P}\subset V$. 
	Then
\begin{equation}\label{f:E_par_lemma}
\E(A) \ll \frac{|A|^3}{p} + |A|^{\frac{5}{2}} + p^2|A| \,.
\end{equation}	
\end{lemma}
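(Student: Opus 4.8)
The additive energy $\E(A)$ counts quadruples $a+b=c+d$ with all four points on $\mathcal P$. Writing each point as $(\u x,\u x\cdot\u x)$, the condition $a+b=c+d$ splits into the base-space equation $\u a+\u b=\u c+\u d$ in $F^3$ together with the height equation $\u a\cdot\u a+\u b\cdot\u b=\u c\cdot\u c+\u d\cdot\u d$. The plan is to fix the common difference vector $\u v=\u a-\u c=\u d-\u b\in F^3$ and, for each such $\u v$, to recast the count as an incidence problem between points and planes in $F^3$, then apply the point-plane incidence bound (Lemma \ref{lem:Misha+}, i.e. Rudnev's theorem) referenced earlier. Concretely, subtracting the two instances of the height equation shows that $\u v$ together with $(\u a,\u c)$ must satisfy a bilinear relation $\u v\cdot(\u a+\u c)=\u a\cdot\u a-\u c\cdot\u c$ — wait, that is automatic; the genuine constraint is $\u v\cdot(\u a+\u c)=\u v\cdot(\u b+\u d)$ after eliminating, which for fixed $\u v$ says that the point $\u a+\u c\in F^3$ and the point $\u b+\u d\in F^3$ lie on parallel planes with normal $\u v$. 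This is exactly the setup in which each quadruple becomes an incidence between a point (one of the $\lesssim|A|^2$ sums $\u a+\u c$) and a plane (determined by the other pair and $\u v$).

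First I would reduce to counting, for each fixed nonzero $\u v$, the number $N(\u v)$ of pairs of pairs producing that difference, observing $\E(A)=\sum_{\u v}N(\u v)$ and that the $\u v=0$ term contributes $|A|^2$. For fixed $\u v\ne 0$, the translate $A\cap(A+(\u v,2\u a\cdot\u v+\dots))$ — one has to be careful that translating a point of $\mathcal P$ in the base direction $\u v$ changes the height in a $\u v$-dependent affine way — produces a set of points, and the height-matching condition for the quadruple becomes a single linear equation relating a point of $A$ (or of the $\u v$-translate) to a plane whose coefficients are built from $\u v$ and the partner point. Collecting over all $\u v$, one gets a weighted point-plane incidence count in $F^3$ with roughly $|A|$ points and $|A|$ (or $|A|^2/$something) planes, to which Rudnev's bound $I(\mathcal{Q},\Pi)\ll |\mathcal{Q}|^{1/2}|\Pi|^{1/2}\max(|\mathcal{Q}|,|\Pi|)^{1/2}\cdot\text{(something)} + |\mathcal{Q}||\Pi|/p + \text{(degenerate plane flat term)}$ applies; the three terms of that incidence bound are what produce, after summing over $\u v$, the three terms $|A|^3/p$, $|A|^{5/2}$, and $p^2|A|$ respectively. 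The $p^2|A|$ term is the contribution of rich planes / collinear configurations (the "worst case" in Rudnev's theorem, where too many points lie on a single line inside a plane), and the $|A|^3/p$ term is the main term matching the expected random count $|A|^3/p$.

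The main obstacle, I expect, is organising the double sum correctly so that the point set and plane set that one feeds into Rudnev's theorem genuinely have sizes $\sim|A|$ (or are controlled after a dyadic pigeonholing on the popularity of differences $\u v$), rather than $|A|^2$, which would give a useless bound. This is the step where the specific quadratic structure of $\mathcal P$ must be exploited: the key point is that for a \emph{fixed} $\u v$ the height condition is only \emph{one} linear equation, so it defines a plane rather than a point, and one should think of the quadruple as: choose $\u a$ (a point, $|A|$ choices), choose $\u b$ (a point, $|A|$ choices), these determine $\u v=\u a-\u c$ implicitly once $\u c$ is chosen, and the single remaining height equation forces $\u c$ (equivalently $\u d$) to lie on a plane through $\u a$ (or $\u b$) determined by the data — so the count is $\sum$ over points $\times$ incidences of those points with a family of planes indexed by the same data. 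A clean way to avoid the $|A|^2$ trap is to first apply Cauchy–Schwarz to pass from $\E(A)$ to a count of \emph{collinear-type} or \emph{cospherical-type} triples, or to dyadically decompose $A=\bigsqcup A_h$ by the fourth coordinate $h$ and handle each slice via its genuinely $2$-dimensional or $3$-dimensional incidence geometry. Secondarily, one must track the characteristic-$p$ hypotheses in Rudnev's theorem (the non-trivial term $|\mathcal{Q}||\Pi|/p$ requires $|\mathcal{Q}|,|\Pi|$ not too large compared with $p^2$), and verify the degenerate case where $A$ concentrates on a single plane or line inside $\mathcal P$ — such as a large piece of the isotropic cone — is exactly accounted for by the $p^2|A|$ term; checking this degenerate extremal example by hand is what confirms the bound cannot be improved.
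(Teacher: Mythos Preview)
Your proposal has the right endpoint in mind---Rudnev's point--plane theorem---but the execution misses the actual setup, and in one place you explicitly steer away from the correct move.

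First, the height constraint simplifies much more cleanly than you found. If $x,y,z,u\in\mathcal P$ satisfy $x+u=y+z$ (so $\u u=\u y+\u z-\u x$), expanding $\u u\cdot\u u$ and cancelling gives the single orthogonality relation
\[
(\u x-\u z)\cdot(\u z-\u y)=0.
\]
So additive quadruples on $\mathcal P$ are exactly \emph{rectangles} in the base $F^3$. This is the clean formulation you were groping for; there is no need to fix a difference vector $\u v$ and sum.

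Second, and this is the main gap: you call the $|A|^2$ count a ``trap'' and look for Cauchy--Schwarz or slicing to avoid it. In fact the paper does exactly the opposite. Take $Q=\u A$ (so $|Q|=|A|$) and let $\Pi$ be the \emph{multiset} of $\sim|A|^2$ planes indexed by pairs $(\u y,\u z)$, each plane passing through $\u z$ with normal $\u y-\u z$. Lemma~\ref{lem:Misha+} is linear in $|\Pi|$, so it tolerates multiplicity provided the number of \emph{distinct} planes is $\gg |Q|$; the paper verifies this via a short graph argument on isotropic differences (assuming $|A|\gg p$; for $|A|\ll p$ the term $p^2|A|$ already dominates). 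Feeding $|Q|=|A|$, $|\Pi|=|A|^2$ into Lemma~\ref{lem:Misha+} gives
\[
\E(A)\ \ll\ \frac{|A|^3}{p}+|A|^{5/2}+k|A|^2,
\]
where $k$ is the maximum number of collinear points of $\u A$.

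Third, the genuine obstacle is not the multiset of planes but this collinearity parameter $k$. If $k\ll|A|^{1/2}$ you are done. Otherwise one must peel off the rich lines and treat them by hand: for a \emph{non-isotropic} rich line $l$ one shows directly that $\E(A_l,A)\ll|A_l||A|$ (a rectangle with one side on $l$ is determined by its diagonal), and a Cauchy--Schwarz sum over the $\ll|A|^{1/2}$ rich lines gives another $|A|^{5/2}$. For \emph{isotropic} rich lines the rectangles degenerate to collinear quadruples, and a trivial count yields $\ll p^2|A|$. So the term $p^2|A|$ is not the ``rich plane'' term inside Rudnev's bound as you guessed; it comes from this separate degenerate-isotropic analysis, outside the incidence theorem.

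In short: drop the sum over $\u v$, embrace the $|A|$-points-versus-$|A|^2$-planes setup, and focus your effort on controlling the collinearity term via the rich-line decomposition.
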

The proof of Lemma \ref{l:par_energy}  is presented in a separate section. We now put together the estimates we have so far in this section and show how they imply Theorem \ref{t:rest}.


\begin{corollary}
	Let $g: V \to \mathbb{C}$ be a function such that $\| g\|_\infty \leq 1$ on its support $G$. 
	Then  
\begin{displaymath}
	\| \hat{g} \|_{L^2 (\P,d\sigma)} \ll \left\{ \begin{array}{ll}
|G|^{\frac{1}{2}} + |G| p^{-\frac{3}{4}} & \textrm{for } \;1\le |G| \le p^{\frac{9}{4}}\,, \\
 |G|^{\frac{1}{2}} p^{\frac{3}{8}} & \textrm{for } \;p^{\frac{9}{4}}\le |G| \le p^{\frac{7}{3}}\,, \\
|G|^{\frac{11}{16}} p^{-\frac{1}{16}} & \textrm{for } \;p^{\frac{7}{3}} \le |G| \le p^3\,,  \\
|G|^{\frac{1}{2}} p^{\frac{1}{2}} & \textrm{for } \;p^3 \le |G| \le p^4\,.
\end{array} \right.
\end{displaymath}
\label{cor:g_hat_bounds}
\end{corollary}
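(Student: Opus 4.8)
The plan is to feed the energy estimate of Lemma~\ref{l:par_energy}, applied to the horizontal slices $G_h$, into the Stein--Tomas bound of Lemma~\ref{l:g^-energy}, and then, for each size of $G$, to retain the smallest of the resulting bound and the two elementary bounds \eqref{f:g_hat_2} and \eqref{f:g_hat_3}; the four regimes in the statement will correspond to which of these expressions is currently the minimum.

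First I would specialise Lemma~\ref{l:g^-energy} to $d=4$, so that the factor $p^{-(d-2)/8}$ becomes $p^{-1/4}$. Since each slice $G_h$ lies on $\mathcal P\subset F^4$, Lemma~\ref{l:par_energy} applies and gives $\E(G_h)\ll |G_h|^3p^{-1}+|G_h|^{5/2}+p^2|G_h|$; taking fourth roots and using subadditivity of $t\mapsto t^{1/4}$,
$$
\E^{1/4}(G_h)\ll |G_h|^{3/4}p^{-1/4}+|G_h|^{5/8}+p^{1/2}|G_h|^{1/4}\,.
$$
Writing $n_h=|G_h|$, so that $\sum_{h\in F}n_h=|G|$ over the $p$ values of $h$, Hölder's inequality yields $\sum_h n_h^\theta\le p^{1-\theta}|G|^\theta$ for $0<\theta\le1$. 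Applying this with $\theta=3/4,\,5/8,\,1/4$, summing, taking a square root and multiplying through by the weight $|G|^{3/8}p^{-1/4}$ from Lemma~\ref{l:g^-energy}, I expect to arrive, after simplification, at
$$
\|\hat g\|_{L^2(\P,d\sigma)}\ll |G|^{3/4}p^{-1/4}+|G|^{11/16}p^{-1/16}+|G|^{1/2}p^{3/8}\,,
$$
the residual $|G|^{1/2}$ from Lemma~\ref{l:g^-energy} being absorbed into the last term since $|G|\ge1$.

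It then remains to minimise the right-hand side of this last display against \eqref{f:g_hat_2}, that is $|G|^{1/2}+|G|p^{-3/4}$, and \eqref{f:g_hat_3}, that is $p^{1/2}|G|^{1/2}$. Comparing the three terms of the new bound pairwise, $|G|^{1/2}p^{3/8}$ is the largest for $|G|\le p^{7/3}$, then $|G|^{11/16}p^{-1/16}$ up to $|G|=p^3$, then $|G|^{3/4}p^{-1/4}$; comparing with \eqref{f:g_hat_2} one finds $|G|^{1/2}+|G|p^{-3/4}\ll|G|^{1/2}p^{3/8}$ exactly for $|G|\le p^{9/4}$, and comparing the surviving $|G|^{3/4}p^{-1/4}$ with \eqref{f:g_hat_3} gives $p^{1/2}|G|^{1/2}\ll|G|^{3/4}p^{-1/4}$ exactly for $|G|\ge p^3$. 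Reading off the minimum on the four resulting intervals $[1,p^{9/4}]$, $[p^{9/4},p^{7/3}]$, $[p^{7/3},p^3]$, $[p^3,p^4]$ reproduces the four lines of the statement, and one can double-check that the adjacent expressions match up to constants at the break-points $p^{9/4},p^{7/3},p^3$ (with the last interval ending at $|G|=|V|=p^4$).

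The hard part will be purely the exponent bookkeeping: tracking the powers of $p$ and $|G|$ faithfully through the fourth root, the summation over the $p$ slices, and the final square root, and then correctly locating the three crossover values; there is no conceptual obstacle once Lemma~\ref{l:par_energy} is in hand.
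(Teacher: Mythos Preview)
Your proposal is correct and follows essentially the same route as the paper: feed the energy bound of Lemma~\ref{l:par_energy} for each slice $G_h$ into Lemma~\ref{l:g^-energy}, sum in $h$ via H\"older, and then minimise against \eqref{f:g_hat_2} and \eqref{f:g_hat_3}. The paper's proof is terser --- it simply records the resulting three-term bound $|G|^{3/4}p^{-1/4}+|G|^{11/16}p^{-1/16}+|G|^{1/2}p^{3/8}$ and says ``combining with \eqref{f:g_hat_2}, \eqref{f:g_hat_3} completes the proof'' --- whereas you spell out the crossover analysis explicitly, but the content is identical.
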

\begin{proof}
	By Lemma \ref{l:g^-energy} and Lemma \ref{l:par_energy}, we have 
$$
	\| \hat{g} \|_{L^2 (\P,d\sigma)} \ll  |G|^{\frac{1}{2}}  \; + \;|G|^{\frac{3}{8}} p^{-\frac{1}{4}} \left(  \sum_{h\in F} p^{-\frac{1}{4}}|G_h|^{\frac{3}{4}} +  |G_h|^{\frac{5}{8}} + p^{\frac{1}{2}}|G_h|^{\frac{1}{4}} \right)^{\frac{1}{2}} \,.
$$
	Hence, by the H\"older inequality, 
$$
\| \hat{g} \|_{L^2 (\P,d\sigma)} 
	\ll
		 |G|^{\frac{3}{4}} p^{-\frac{1}{4}} + |G|^{\frac{11}{16}} p^{-\frac{1}{16}}  + |G|^{\frac{1}{2}} p^{\frac{3}{8}}\,.
$$
	
	Combining this with inequalities \eqref{f:g_hat_2}, \eqref{f:g_hat_3} completes the proof of Corollary \ref{cor:g_hat_bounds}. 
\end{proof}


We are now ready to prove Theorem \ref{t:rest}. 

\begin{proof}[Proof of Theorem \ref{t:rest}]

By duality,  for all $g : V \to \mathbb{C}$, it suffices to establish the restriction estimate $\| \hat{g} \|_{L^2 (\P,d\sigma)} \ll 1$, given that 
\begin{equation}\label{f:p_restriction}
	\sum_{x\in V} |g(x)|^{\frac{3}{2}} = 1 \,.
\end{equation}

We  start out with dyadic decomposition of the set of level sets of $g$. Namely,  for  $i=0,1,\ldots,L\ll \log p,$ define $G_i$ to be the set of $x\in V$, where $g(x)\sim 2^{-i}$ up to a factor of $2$, and $g_i$ is the restriction of $g$ to $G_i$. 

Clearly, from \eqref{f:p_restriction} one has $|\supp g_i| \ll 2^{\frac{3}{2}i}$.

By the triangle inequality, followed by invoking Corollary \ref{cor:g_hat_bounds}, we obtain
\begin{equation} \begin{aligned}
	\| \hat{g} \|_{L^2 (\P,d\sigma)}  & \ll \;\;\;\;\;\;\;\sum_{i=0}^L 2^{-i} \| \hat{g}_i \|_{L^2 (\P,d\sigma)} 
\\ &	\ll 
	\;\;\sum_{i :~ 2^{\frac{3}{2}i} \le p^{\frac{9}{4}}} 2^{-i} \left( 2^{\frac{3}{4}i} + p^{-\frac{3}{4}} 2^{\frac{3}{2}i} \right) \;\; + \;\;\sum_{i :~ p^{\frac{9}{4}}\le 2^{\frac{3}{2}i} \le p^{\frac{7}{3}}} 
	2^{-i}2^{\frac{3}{4}i}p^{\frac{3}{8}}   \\
	& +
	\sum_{i :~ p^{\frac{7}{3}} \le 2^{\frac{3}{2}i} \le p^3} 2^{-i}  2^{\frac{33}{32} i} p^{-\frac{1}{16}}
	\;\;\;\;\;\;\;\;\;\;\;+\;\;\;\;\;
	\sum_{i :~ p^3 \le 2^{\frac{3}{2}i} \le p^4} 2^{-i}  2^{\frac{3}{4}i} p^{\frac{1}{2}}
\\ &
\ll \;\;\;\;1 + 1+ 1 + 1 + 1 \\ &
\ll \;\;\;1,
\end{aligned}\label{long}\end{equation}
as required. 
\end{proof}

\subsection{Proof of Lemma \ref{l:par_energy}}
We use the point-plane incidence theorem of the first author ~\cite[Theorem 3, 3*]{Rud} (see also~\cite[Theorem~8]{MPR-NRS} and the proof of~\cite[ Corollary~2]{MuPet2}), 
which combined with the incidence bound 
from~\cite[Section~3]{MuPet1} leads to the following asymptotic formula.

\begin{lemma}
	\label{lem:Misha+}
	Let  $Q \subseteq \F^3$ be a set of points and $\Pi$ a set of planes in $F^3$. Suppose that $|Q| \le |\Pi|$ and that $k$ is the maximum number of collinear points in $Q$. Then
	$$
	\sum_{q \in Q} \sum_{\pi \in \Pi} 
	\pi (q)
	- \frac{|Q| |\Pi|}{p} \ll |Q|^{\frac{1}{2}} |\Pi| + k |Q| \,.
	$$
	Moreover, if $L$ is a set of lines in $F^3$ and one excludes incidences $(q,\pi)\in Q\times \Pi$, such that $q\in l\subset \pi$ for some $l$ in $L$, then $k$ gets replaced by the maximum number of points of $Q$ on a line not in $L$. 
\end{lemma}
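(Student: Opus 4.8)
The plan is to obtain Lemma~\ref{lem:Misha+} by combining the two incidence inputs announced just above its statement: Rudnev's point--plane incidence theorem~\cite[Theorems~3 and 3*]{Rud} --- an adaptation of the Guth--Katz polynomial method to $F^3$ --- and the complementary point--plane estimate of Murphy and Petridis~\cite[Section~3]{MuPet1}. Rudnev's Theorem~3 gives, for a point set $Q$ and a plane set $\Pi$ in $F^3$ with $|Q|\le|\Pi|$ and $|Q|=O(p^2)$, the bound
\[
\sum_{q\in Q}\sum_{\pi\in\Pi}\pi(q) - \frac{|Q||\Pi|}{p} \;\ll\; |Q|^{\frac12}|\Pi| + k|\Pi| \,,
\]
with $k$ the maximum number of collinear points of $Q$; Theorem~3* is the refinement that lets one discard every incidence $(q,\pi)$ with $q\in\ell\subset\pi$ for $\ell$ in a prescribed family $L$ of lines, after which $k$ may be replaced by the largest number of points of $Q$ on a line not in $L$. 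The ``Moreover'' clause of the lemma is precisely Theorem~3*, so that part needs no argument; what remains is to extend the statement to all sizes of $Q$ and to sharpen the degenerate term.

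The range $|Q|\gg p^2$, where Rudnev's hypothesis fails, is covered directly by the Murphy--Petridis estimate of~\cite[Section~3]{MuPet1}, which is designed for large point sets; there $\tfrac{|Q||\Pi|}{p}$ already dominates the entire asserted right-hand side, so the two ranges glue without loss. For the degenerate term I would dyadically split the lines of $F^3$ by $Q$-richness, $L_j=\{\ell:\,2^j\le|\ell\cap Q|<2^{j+1}\}$ (non-empty only for $2^j\le k$), fix a cut-off level $J$, and apply Theorem~3* with $L=\bigcup_{j\ge J}L_j$; this bounds the incidences it still counts by $\tfrac{|Q||\Pi|}{p}+O(|Q|^{1/2}|\Pi|+2^{J}|\Pi|)$. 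The discarded incidences are those supported on lines carrying $\ge 2^{J}$ points of $Q$; each such line is contained in at most $\min(p+1,|\Pi|)$ planes of $\Pi$ and carries $<2^{j+1}$ points, so the contribution of level $j\ge J$ is at most $2^{j+1}$ times the number of flags $(\ell,\pi)$ with $\ell\in L_j$ and $\ell\subset\pi\in\Pi$. The naive bound on this flag count --- (number of $\ge 2^{j}$-rich lines) $\times$ (pencil size $p+1$), using that each rich line is charged to a pair of $Q$-points --- is too lossy and only reproduces a $k|\Pi|$-type term; it is here that I would invoke the Murphy--Petridis incidence estimate, which bounds the rich line--plane flag count sharply, and summing the resulting geometric series over $j\ge J$ and optimising $J$ then delivers the degenerate term in the stated form.

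The crux, and the step I expect to be hardest, is exactly this control of rich line--plane flags: the polynomial-method bound alone loses $k|\Pi|$, and it is the auxiliary Murphy--Petridis estimate that has to absorb the difference. This is also the structural reason the characteristic $p$ is visible in the statement --- both through Rudnev's main term $\tfrac{|Q||\Pi|}{p}$ and through the $p+1$ size of a plane pencil through a line --- so, as explained in~\cite{Rud}, the argument does not transfer verbatim to an arbitrary finite field $\F_q$. A routine point to check along the way is that the dyadic sum over $j$ loses no spurious $\log p$, which holds because the richness levels decay geometrically and the top level dominates.
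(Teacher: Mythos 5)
Your first step reconstructs what the paper actually does: Lemma~\ref{lem:Misha+} is not proved in the paper at all, but imported from \cite[Theorems 3, 3*]{Rud} -- the ``Moreover'' clause being precisely Theorem~3* -- with the restriction $|Q|=O(p^2)$ in Rudnev's theorem removed by invoking the large-set point--plane bound of \cite[Section~3]{MuPet1} in the complementary range (see also \cite{MPR-NRS}, \cite{MuPet2}). Up to that point your proposal coincides with the intended, citation-level derivation, although your gluing claim (``$|Q||\Pi|/p$ dominates the right-hand side'') should really be phrased as: the error term of the large-set bound is itself absorbed by $|Q|^{\frac12}|\Pi|$ when $|\Pi|\ge|Q|\gg p^2$.

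The genuine gap is the second half, the attempted ``sharpening'' of the degenerate term from $k|\Pi|$ to $k|Q|$. First, it is not an argument: the decisive ingredient -- a flag-count estimate for rich lines inside planes of $\Pi$ that beats the trivial pencil bound -- is never formulated, and you yourself flag it as the unproved crux. Second, and decisively, no such argument can exist, because the inequality with $k|Q|$ is false. Take $Q$ to be $k\sim p^{1/2}$ collinear points on a line $\ell\subset F^3$ and $\Pi$ the $p+1$ planes containing $\ell$, so that $|Q|\le|\Pi|$; then the incidence count is $k(p+1)\sim p^{3/2}$, whereas $\frac{|Q||\Pi|}{p}+|Q|^{\frac12}|\Pi|+k|Q|\sim p^{\frac54}$. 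The degenerate term must therefore be $k|\Pi|$, exactly as in Rudnev's theorem, and this is also how the lemma is used later in the paper: in the proof of Lemma~\ref{l:par_energy}, with $|Q|=|A|$ and $|\Pi|=|A|^2$, the resulting bound is $\E(A)\ll |A|^3/p+|A|^{5/2}+k|A|^2$, i.e.\ the term $k|\Pi|$, not $k|A|$ (indeed, were $k|Q|$ available, the entire subsequent analysis of rich lines in that proof would be superfluous). The printed $k|Q|$ in the statement is a slip, not a strengthening to be established. The correct course is to stop after your first step: quote Theorems~3 and~3* of \cite{Rud} with the term $k|\Pi|$, combine with \cite[Section~3]{MuPet1} for $|Q|\gg p^2$, and abandon the dyadic flag-count programme, which chases a claim that is both unnecessary for the application and false.
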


\begin{proof}[Proof of Lemma \ref{l:par_energy}]

	Clearly,  if $x,y,z\in \P$, then in order that $x-z+y$ also be in $\P$, one must have $(\u x- \u z+\u y)\cdot (\u x- \u z+\u y) = \u x \cdot \u x - \u z \cdot \u z + \u y \cdot \u y$,
	which means
\begin{equation}\label{f:inc_d-1}
	(\u x - \u z) \cdot (\u z - \u y) = 0\,.
\end{equation}
Permuting the variables in the definition of energy \eqref{energy} we arrive in a simple criterion: a quadruple $(x,y,z,u)\in \mathcal P^4$ satisfies \eqref{energy} if and only if 
$(\u x,\u y, \u z,\u u)\in (F^3)^4$ is a {\em rectangle}, as opposed to generally being a parallelogram to form an additive quadruple just in $(F^3)^4$. Namely at each vertex $\u x,\u y, \u z,\u u$, the 
dot products of adjacent difference vectors is zero. 
Note that condition \eqref{f:inc_d-1} should hold at every vertex of the rectangle, hence finding all solutions of the latter relation applies to each geometric rectangle at least four times. In the sequel we will use this freedom to chose the convenient ``corner'' $(x,y,z)$ of the rectangle and the fact that the energy equals four times the number of ``geometric' rectangles.

The rectangles one encounters in $d=4$ are of three types. The first type is {\em ordinary} ones, that is lines along all the four sides are non-isotropic. These rectangles ``look like'' the Euclidean ones. The opposite case is  {\em degenerate} rectangles, namely when both sides, adjacent to a vertex, are isotropic vectors. This implies that $\u x,\u y, \u z,\u u$ all lie on the same isotropic line, for isotropic lines through a point in $F^3$ form a copy of the cone $S_0$, rather than a plane. The intermediate case comprises {\em semi-degenerate} rectangles, namely when the lines along one pair of opposite sides are isotropic, but for the other two sides -- non-isotropic. Given the line $l$ containing an isotropic side of such a rectangle (by which we mean that $l$ contains two adjacent vertices), the rectangle must lie in the unique {\em semi-isotropic plane} $l^\perp$, containing $l$. Heuristically, every parallelogram in $l^\perp$, one of whose sides is parallel to $l$ is a rectangle.

We further use $\u A$ to denote the projection of $A\subseteq \mathcal P$ on the first three variables: $A$ is a graph over $\u A$, and $|\u A|=|A|.$ We seek to count solutions of equations \eqref{f:inc_d-1}, when the variables are in $\u A$.

Obviously, a solution of equation (\ref{f:inc_d-1}) can be interpreted as a point-plane incidence in $F^3$: $\u x$ being the point and the plane $\pi$ being the one
	passing through $\u z$ and with the 
	normal vector $\u y- \u z$. In the sequel we assume that $\u y\neq \u z$, for otherwise we have $|A|^2$ trivial solutions to the energy equation.
	
	The set of planes is, in fact, a multiset. Observe that linearity of the estimate of Lemma \ref{lem:Misha+} in $|\Pi|$ enables the set of planes $\Pi$ to be a multiset as well: the estimate  of the lemma remains valid, with $m$ counting planes with multiplicities, as long as the number of distinct planes, defined by $\Pi$ is $\gg |Q|$. To this end, as to \eqref{f:inc_d-1}, we set $m=|A|^2$, $n=|A|$ and need to show that the number of distinct planes, defined by the latter relation is $\gg n$.
	
	We do this assuming $|A|\gg Cp$, with a sufficiently large absolute $C$, the number of distinct  planes, defined by pairs $(\u y,\u z)$ is $\gg |A|$, for otherwise, if $|A|\ll p$, the energy $\E(A)$ can be trivially bounded by the last term in \eqref{f:E_par_lemma}, whereupon nothing remains left to prove.
	
	Observe that given $\u y$, each value of $\u z$ yields a different plane, 
	unless the vector $\u z - \u y$ is isotropic. Hence, by the pigeonhole principle, we have $\gg|A|$ distinct planes, provided that the number of pairs $(\u y, \u z)$, such that $\u z - \u y$ is non-isotropic is $\gg|A|^2$. Let us show that this is certainly the case if $|A|\gg Cp$ for a sufficiently large $C$. 
	
	Indeed, we can build a graph $\Gamma$ on the vertex set $\u A$, connecting vertices $(\u y, \u z)$ by an edge if $\u z - \u y$ is isotropic, and we shall show that $\Gamma$ needs $\gg |A|^2$ additional edges to be turned into a complete graph.
	
	One cannot have a non-trivial triangle (that is when its  three distinct vertices are non-collinear on an isotropic line)  with vertices $\u a,\,\u b,\,\u c\,\in\, F^3$, so that each side of the triangle is isotropic. Indeed,
	$$
	\u a-\u c = (\u a-\u b) - (\u c-\u b),
	$$
	hence if both sides $\u a-\u b$  and $\u c-\u b$ are isotropic, then $\u a-\u c$ is not, for otherwise we would have $(\u a-\u b) \cdot (\u c-\u b)=0$, and hence the whole plane in $F^3$, defined by the points $\u a,\,\u b,\,\u c$ would be isotropic, which cannot happen. If $|A|\geq Cp$, for some sufficiently large absolute constant $C$, then the number of collinear point triples in $A$, lying on isotropic lines is at most $|A|p(p+1)\ll \frac{|A|^3}{C^2}$. (From each point there are at most $p+1$ isotropic directions, however we could use a cruder estimate $|A|^2p$). A collinear triple on an isotropic line is the only way to have a triangle in the graph $\Gamma$, so $\Gamma$ has a small triangle density $\ll C^{-2}$. On the other hand, if by the pigeonhole principle the edge density of $\Gamma$ were as large as $(1-c)$, for a sufficiently small constant $c>0$ -- which would mean that $\u z -\u y$ were non-isotropic for only $\ll c|A|^2$ pairs $(\u y, \u z)$ -- then the triangle density would be at least $1-O(c)$. In fact, one can use a much more fine-tuned asymptotic formula by Razborov \cite{Raz}, which tells that that the triangle density would be at least $1-3c + O(c^2)$.

	We continue under the assumption that $|A|\gg p$ and applying Lemma \ref{lem:Misha+} obtain an intermediate bound for energy 
$$
\E(A) \ll \frac{|A|^3}{p} + |A|^{\frac{5}{2}} + k|A|^2  \,,
$$
where $k$ is the maximum number of collinear points in $\u A$. Clearly, if $k\ll |A|^{\frac{1}{2}}$, we are done.

\bigskip 

Otherwise, we proceed as follows.  Let $L$ be the set of all lines in $F^3$, supporting, say $\geq 10 |A|^{\frac{1}{2}}$ points of $\u A$. By excluding the incidences along the lines in $L$ in the application of Lemma \ref{lem:Misha+}, we succeed in counting all the rectangles in $\u A$, such that at least one line containing the side of the rectangle is not in $L$. The number $\E_1$ of such rectangles is

\begin{equation}\label{f:E_par1}
\E_1 \ll \frac{|A|^3}{p} + |A|^{\frac{5}{2}} \,,
\end{equation}

 Let $L'\subseteq L$ be the subset of  non-isotropic lines and $\u A'$ the subset of $\u A$, supported on the union of these lines, and $A'$ its lift on $\mathcal P$.
We now bound $\E(A',A)$. This count will include all ordinary and semi-degenerate rectangles, the line supporting one of whose sides contains $\geq 10 |A|^{\frac{1}{2}}$ points of $\u A$. Let $\E_2\leq \E(A',A)$ be the number of such rectangles.

By the exclusion-inclusion principle, $\u A'$ is supported on some minimal set $L'$ of $|L'|\ll  |A'| |A|^{-\frac{1}{2}}$ non-isotropic lines with at least $10 |A|^{\frac{1}{2}}$ points of $\u A$ on each line. 

Let $\u A_l := l \cap \u A'$, for a line $l\in L'$, $A_l$ denoting the lift  of $\u A_l$ on $\mathcal P$.

Let us show that
\begin{equation}\label{mix}\E(A_l,A) = |\{ ( x,  y,  z, u) \in A\times A_l\times A\times A_l:\;  x+  y =  z + u\}|\ll |A_l||A|\,.\end{equation}
Geometrically speaking, if we fix a diagonal of a rectangle, whose one side lies on a non-isotropic line, this fixes the remaining two vertices. 

Analytically, without loss of generality, the line $l$ can be parametrised as $x_1=t$, $x_2=\a t+ \beta$, $x_3=\gamma t +\delta$ for some $(\a,\beta,\gamma,\delta)\in F^4$.  

Since $A \subseteq \mathcal{P}$, then a quadruple satisfying the energy equation in \eqref{mix} is described by the system of equations
$$\begin{array}{rcl}
	t^2 (1+\a^2 +\gamma^2) + 2 t(\a \beta + \gamma \d ) + \u x \cdot \u x &=& s^2 (1+\a^2 +\gamma^2) + 2s(\a \beta + \gamma \d )  + \u z \cdot \u z\,,
\\  \hfill \\
	\u x - \u z & = & (s-t) (1,\a, \gamma) \,.\end{array}
$$
The second equation is vacuous if  $s=t$ and $\u x = \u z$, yielding $|A| |A_l|$ trivial solutions to \eqref{mix}. Otherwise we eliminate $\u x$, getting $$
	t^2 (1+\a^2 +\gamma^2)  +  t(\a \beta + \gamma \d ) - ts (1+\a^2 +\gamma^2) 
		=
 	s(\a \beta + \gamma \d ) + (t-s) \u z \cdot  (1,\a, \gamma) \,,
$$
a quadratic equation in $t$. Since the line $l$ is non-isotropic, the leading coefficient
$1+\a^2 +\gamma^2 \neq 0$. 
This gives at most $2|A_l| |A|$ nontrivial solutions  to the equation in \eqref{mix}.  

It follows from \eqref{mix}  by the Cauchy-Schwarz inequality and since $|L'|\ll |A|^{\frac{1}{2}}$ that
\begin{equation}\label{f:E_par2}
	\E_2 \le \left( \sum_{l\in L'} \E(A,A_l)^{\frac{1}{2}} \right)^{2} \ll |A| \left(\sum_{l\in L'} |A_l|^{\frac{1}{2}} \right)^{2}
		\le
			|A| |L'| |A'| \; \ll \; |A|^{\frac{5}{2}}.
\end{equation}

It remains to count the degenerate rectangles on lines in $L$, whose number we denote as $\E_3$.
It's easy to see that $\E_3 \ll |A|p^2$. Indeed, take a dyadic subset of isotropic lines in $L$, each supporting $\sim k$ points of $\u A$. Then the maximum number of degenerate rectangles supported on these lines is trivially bounded as $\ll \frac{|A|}{k} k^3$. Since $k\leq p$, the dyadic sum is dominated by the term with the largest value of $k$, that is $k=p$.

Combining this bound with bounds \eqref{f:E_par1}, \eqref{f:E_par2}, since $\E(A)\ll\E_1+\E_2+\E_3$, completes the proof of Lemma \ref{l:par_energy}. 
\end{proof}

\section{Proof of Theorem \ref{t:rest3}}
We set $d=3$, $V=F^3$ and assume that the base plane $F^2$ has no isotropic directions, that is $p\equiv 3 \pmod 4$. Whether or not there are isotropic directions, interpolating just between formulae \eqref{f:g_hat_2}  and \eqref{f:g_hat_3} yields a variant of Corollary \ref{cor:g_hat_bounds}, which results in the Stein-Tomas extension estimate $L^2\to L^4$, which the presence of isotropic lines makes the best possible one if $p\equiv 1 \pmod 4$.

Since the structure of the proof of Theorem \ref{t:rest3} is similar to that of the proof of Theorem \ref{t:rest}, we omit some tedious straightforward calculations in the sequel. 

We first present an incidence  statement that will play the same role as Lemma \ref{l:par_energy} played in the case $d=4$.
Actually, the second bound of  Lemma \ref{l:corners} coincides with \cite[Lemma 4.1]{IKL} but for us it is an unconditional bound of relatively little consequence. 
\begin{lemma}\label{l:corners}
For $A\subseteq \mathcal{P}\subset V,$ one has
\begin{equation}\label{f:E2}
\E(A) \;\ll \;\left\{ \begin{array}{lll} |A|^{\frac{17}{7}} &for & |A|\leq p^{\frac{26}{21}}\,,\\ \hfill \\
 \frac{|A|^3}{p}  + |A|^2\sqrt{p}\,.\end{array}\right.
\end{equation}\end{lemma}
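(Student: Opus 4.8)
The plan is to reduce the energy $\E(A)$ for $A \subseteq \mathcal P \subset F^3$ to a point-line incidence count in $F^2$ and then apply the Stevens--de Zeeuw theorem (Lemma \ref{lem:2d}, referenced but not yet stated in this excerpt), much as the $d=4$ case used the point-plane bound of Lemma \ref{lem:Misha+}. First I would run the same opening move as in the proof of Lemma \ref{l:par_energy}: a quadruple $(x,y,z,u)\in\mathcal P^4$ is an additive quadruple iff $(\u x,\u y,\u z,\u u)\in (F^2)^4$ is a \emph{rectangle}, i.e.\ $(\u x-\u z)\cdot(\u z-\u y)=0$ at each vertex. Since $p\equiv 3\pmod 4$ there are no isotropic directions in $F^2$, so every rectangle is ``ordinary'' --- this is precisely the simplification the hypothesis buys us, and it removes the degenerate and semi-degenerate cases that cluttered the four-dimensional argument. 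Counting rectangles then amounts to: for each pair $(\u z,\u y)$ with $\u z\neq\u y$, count $\u x\in\u A$ on the line through $\u z$ perpendicular to $\u y-\u z$. This is a point-line incidence problem $I(\u A, \mathcal L)$ in $F^2$ where $\mathcal L$ is a multiset of $\sim |A|^2$ lines; as in the $d=4$ case, linearity in the number of lines lets us work with the multiset provided the number of distinct lines is $\gg |A|$, which holds (for $|A|\gg p$ the perpendicular-direction map is injective enough, and for $|A|\ll p$ the second bound of \eqref{f:E2} is trivial from $\E(A)\le |A|^3/p + |A|^2\sqrt p$ directly via the trivial $\E(A)\le |A|^3$).

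Next I would invoke the Stevens--de Zeeuw incidence bound, which for a set of $m$ points and $n$ lines in $F_p^2$ with $m\le n$ and $m\le p^{4/3}$ (or some such range) gives $I \ll m^{1/2} n^{3/4} + m + n$ (up to the exact exponents in their Lemma \ref{lem:2d}). Plugging $m=|A|$ points and $n\sim|A|^2$ lines yields roughly $I \ll |A|^{1/2}\cdot |A|^{3/2} + |A|^2 = |A|^2$, which is too weak --- the genuinely useful regime is when we first remove the heavy lines. So the real argument, following the Pach--Sharir repeated-angle strategy the authors advertise, is a two-part dyadic decomposition over the number of points $\u A$ carries on a line: for lines carrying $\sim s$ points of $\u A$, the Szemer\'edi--Trotter-type bound caps the number of such lines, and one sums the contribution $\sum_s (\#\text{lines with }\sim s\text{ points})\cdot(\text{rectangles they support})$ against $|A|$. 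Lines rich in $\u A$ contribute via a Cauchy--Schwarz argument exactly analogous to \eqref{mix}--\eqref{f:E_par2}: fixing a diagonal of a rectangle whose side lies on a (necessarily non-isotropic) line pins down the other two vertices, giving $\E(A,A_l)\ll |A_l||A|$ per line $l$, hence a total $\ll |A|^{5/2}$ from the rich lines; the poor lines are handled by the plain incidence bound. Balancing the two regimes at the threshold $|A| = p^{26/21}$ should produce the exponent $\frac{17}{7}$ in the stated range, while outside it one falls back on $\E(A)\ll |A|^3/p + |A|^2\sqrt p$, which is the $d=3$ analogue of the $p^2|A|$ term and follows from the crude incidence bound plus \eqref{f:g_hat_2}--type considerations, or simply from the Stevens--de Zeeuw bound in its large-set regime.

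The main obstacle I anticipate is bookkeeping the dyadic decomposition so that the exponents land exactly on $\frac{17}{7}$ and $p^{26/21}$: one must track the precise form of the Stevens--de Zeeuw bound (including its validity range and the secondary terms $m+n$ or $p$-dependent corrections), decide the cutoff between ``rich'' and ``poor'' lines as a power of $|A|$, and verify that the Cauchy--Schwarz step over rich lines genuinely gives $|A|^{5/2}$ (using $|L'|\ll |A|^{1/2}$ as in \eqref{f:E_par2}, which requires the poor/rich threshold to be $\sim|A|^{1/2}$). A secondary subtlety is confirming that the multiset of perpendicular lines contains $\gg|A|$ distinct lines --- here the absence of isotropic directions when $p\equiv3\pmod4$ is what makes the perpendicular-direction assignment $(\u z,\u y)\mapsto(\text{pt }\u z,\text{dir }(\u y-\u z)^\perp)$ non-degenerate, so no separate graph-theoretic argument (of the Razborov type used in $d=4$) is needed. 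Once Lemma \ref{l:corners} is in hand, the passage to Theorem \ref{t:rest3} mirrors the proof of Theorem \ref{t:rest} verbatim: feed the energy bound into Lemma \ref{l:g^-energy}, apply H\"older over the slices $G_h$, interpolate with \eqref{f:g_hat_2} and \eqref{f:g_hat_3}, and run the dyadic-pigeonhole sum over level sets of $g$ as in \eqref{long}; optimizing the resulting exponents against the $L^{q}$ target yields $q=\frac{32}{9}$.
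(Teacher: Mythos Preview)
Your outline tracks the paper's structure at the top level --- reduce to rectangles in $\u A\subset F^2$, invoke Stevens--de Zeeuw, split into rich and poor lines in the Pach--Sharir spirit --- but the treatment of the rich lines is off and will not reach the exponent $\tfrac{17}{7}$.

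First, a structural point: the Stevens--de Zeeuw bound is $(mn)^{11/15}+m+n$, which is \emph{not} linear in the number of lines, so the multiset trick you import from the $d=4$ proof (where Rudnev's point-plane bound is genuinely linear in $|\Pi|$) does not apply. You cannot feed in the $\sim|A|^2$ perpendicular lines as a multiset and quote Lemma~\ref{lem:2d} directly; the ``$+n$'' term alone would give a useless $|A|^2$.

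Second, your rich-lines step --- using $\E(A,A_l)\ll|A_l|\,|A|$ and Cauchy--Schwarz over $|L'|\ll|A|^{1/2}$ lines exactly as in \eqref{f:E_par2} --- yields only $|A|^{5/2}$. Since $\tfrac{5}{2}>\tfrac{17}{7}$, this is too weak: no balancing against a poor-line term will recover $|A|^{17/7}$, and the threshold $|A|^{1/2}$ is simply the wrong cutoff. The paper instead writes
\[
N=\sum_{\u z\in\u A}\ \sum_{l\ni\u z} n(l)\,n(l^\perp)\,,
\]
applies Cauchy--Schwarz pointwise to bound the ``just-rich'' part by $\sum_{\u z}\sum_{l} n(l)^2$, i.e.\ a count of \emph{collinear triples}, and then controls this via the $k$-rich lines estimate $m_k\ll n^{11/4}k^{-15/4}+n^{5/4}k^{-1}$ derived from Lemma~\ref{lem:2d}. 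Summing over dyadic $k\ge k_\ast$ gives $N_{just\text{-}rich}\ll n^{11/4}k_\ast^{-3/4}$, which one optimises against $N_{poor}\le 2k_\ast n^2$ to find $k_\ast=n^{3/7}$ and $N\ll n^{17/7}$. There is a further split at $k\sim n^{6/11}$ (``very rich'' lines), handled trivially to give $\ll n^{9/4}\log n$. The constraint $|A|\le p^{26/21}$ arises not from a balance of two regimes but from the validity range of the loosened $k$-rich bound.

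The unconditional second line of \eqref{f:E2} is not a fallback from trivial $\E(A)\le|A|^3$ as you suggest; it comes from the Vinh-type bound (the second line of \eqref{ins}) applied to the same incidence formulation.
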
	
The proof of Lemma \ref{l:corners} is presented in a separate section.

Substituting these bounds as $\E(G_h)$ in the estimate  \eqref{e:g^-energy} of Lemma \ref{l:g^-energy} and optimising with the Stein-Tomas inequalities \eqref{f:g_hat_2}, \eqref{f:g_hat_3} yields the following statement, an an analogue of Corollary \ref{cor:g_hat_bounds}.

\begin{corollary}
	Let $g: V \to \mathbb{C}$ be a function such that $\| g\|_\infty \leq 1$ on its support $G$. 
	Then  
\begin{displaymath}
	\| \hat{g} \|_{L^2 (\P,d\sigma)} \ll \left\{ \begin{array}{ll}
|G|^{\frac{1}{2}} + |G| p^{-\frac{1}{2}} & \textrm{for } \;1\le |G| \le p^{\frac{16}{9}}\,, \\
|G|^{\frac{19}{28}} p^{\frac{1}{14}}+|G|^{\frac{7}{8}}p^{-\frac{125}{336}} & \textrm{for } \;p^{\frac{16}{9}} \le |G| \le p^{\frac{47}{21}}\,,  \\
|G|^{\frac{5}{8}} p^{\frac{3}{16}} & \textrm{for } \;p^{\frac{47}{21}} \le |G| \le p^{\frac{5}{2}}\,,\\
|G|^{\frac{1}{2}} p^{\frac{1}{2}} & \textrm{for } \;p^{\frac{5}{2}} \le |G| \le p^3\,.
\end{array} \right.
\end{displaymath}
\label{cor:g_hat_bounds2d}
\end{corollary}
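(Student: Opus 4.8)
The plan is to follow the recipe behind Corollary \ref{cor:g_hat_bounds}: feed the two energy estimates of Lemma \ref{l:corners} into the slice sum $\sum_{h\in F}\E^{1/4}(G_h)$ on the right of \eqref{e:g^-energy} of Lemma \ref{l:g^-energy} — here $d=3$, so the prefactor $p^{-(d-2)/8}$ becomes $p^{-1/8}$ — carry out the sum over $h$ by Hölder, and then intersect the resulting list of monomial bounds with the Stein--Tomas estimates \eqref{f:g_hat_2} and \eqref{f:g_hat_3}, keeping in each dyadic window of $|G|$ only the largest term.

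The one structural point is how to sum over slices. I would split the nonempty slices into the \emph{small} ones, with $|G_h|\le p^{26/21}$, on which the first (sharper) bound of Lemma \ref{l:corners} applies so that $\E^{1/4}(G_h)\ll|G_h|^{17/28}$, and the \emph{big} ones, with $|G_h|>p^{26/21}$, where only $\E^{1/4}(G_h)\ll|G_h|^{3/4}p^{-1/4}+|G_h|^{1/2}p^{1/8}$ is available. For the small slices, since $t\mapsto t^{17/28}$ is concave and there are at most $p$ of them carrying total mass $\le\min(|G|,p^{47/21})$, Hölder gives $\sum_{\mathrm{small}}\E^{1/4}(G_h)\ll p^{11/28}|G|^{17/28}$ as long as $|G|\le p^{47/21}$ (and a bounded quantity beyond that), which contributes the term $|G|^{19/28}p^{1/14}$ to \eqref{e:g^-energy}. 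For the big slices the extra leverage is that there are at most $|G|p^{-26/21}$ of them; combining this count with $\sum_{\mathrm{big}}|G_h|\le|G|$ via Cauchy--Schwarz converts the $|G_h|^{1/2}p^{1/8}$ part of the bound into a contribution of the shape $|G|^{7/8}p^{-125/336}$, while the $|G_h|^{3/4}p^{-1/4}$ part (and the $|G_h|^3/p$ regime $|G_h|>p^{3/2}$) turns out never to be larger in the ranges that matter. Summing all slices crudely by $\E^{1/4}(G_h)\ll|G_h|^{1/2}p^{1/8}$ over at most $p$ values of $h$ gives, in addition, the cruder term $|G|^{5/8}p^{3/16}$, which takes over once the small slices saturate.

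Putting the pieces together, \eqref{e:g^-energy} bounds $\|\hat g\|_{L^2(\P,d\sigma)}$ by the maximum of $|G|^{1/2}$, $|G|^{19/28}p^{1/14}$, $|G|^{7/8}p^{-125/336}$ and $|G|^{5/8}p^{3/16}$ (together with lower-order monomials that are always dominated), while \eqref{f:g_hat_2} contributes $|G|^{1/2}+|G|p^{-1/2}$ and \eqref{f:g_hat_3} contributes $|G|^{1/2}p^{1/2}$. Equating consecutive pairs of these monomials pins down the crossovers $|G|=p^{16/9}$, $p^{47/21}$, $p^{5/2}$: for $|G|\le p^{16/9}$ the trivial bound \eqref{f:g_hat_2} dominates, on $[p^{16/9},p^{47/21}]$ the extension-specific pair $|G|^{19/28}p^{1/14}+|G|^{7/8}p^{-125/336}$ dominates, on $[p^{47/21},p^{5/2}]$ the crude term $|G|^{5/8}p^{3/16}$ dominates, and for $|G|\ge p^{5/2}$ one is left with \eqref{f:g_hat_3}, which is exactly the claimed four-case formula.

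The computation itself is short once this is organised; the only thing requiring care — and the place where it is easiest to slip — is the bookkeeping around the validity window $|G_h|\le p^{26/21}$ of the first bound of Lemma \ref{l:corners}, the secondary crossover $|G_h|=p^{3/2}$ inside the second bound, and the resulting saturation effect for the small slices, which is precisely what forces the break point at $|G|=p^{47/21}$ rather than where the two small-slice monomials would naively meet. All of these are manifestations of the fact that $\E^{1/4}(G_h)$ is a concave power of $|G_h|$, so that the worst configuration for the slice sum is mass spread as uniformly as the constraint $|G_h|\le p^{26/21}$ permits.
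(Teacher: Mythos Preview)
Your proposal is correct and follows essentially the same approach as the paper: split the slices $G_h$ at the threshold $|G_h|=p^{26/21}$, apply the first bound of Lemma~\ref{l:corners} to the small ones and the second bound to the big ones (counting the latter by $|G|p^{-26/21}$), pass through H\"older, and intersect with the Stein--Tomas bounds \eqref{f:g_hat_2}, \eqref{f:g_hat_3}. The only difference is organisational --- you treat all regimes in one pass and then read off the crossovers, whereas the paper proceeds range by range --- and your remark that $p^{47/21}$ is the saturation point for the small-slice sum (rather than a naive monomial crossover) is exactly the reason the paper handles the range $p^{47/21}\le|G|\le p^{5/2}$ separately using only the unconditional bound.
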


\begin{proof} We start with the last estimate, namely the bound \eqref{f:g_hat_3}, which is better than the unconditional bound in the second line of 
\eqref{f:E2} for $|G|\geq p^{\frac{5}{2}}.$

Furthermore, when $p^{\frac{47}{21}} \le |G| \le p^{\frac{5}{2}}$ we use the unconditional bound in the second line of 
\eqref{f:E2}: substituting it for each $\E(G_h)$ in formula \eqref{e:g^-energy} and applying the H\"older inequality yields an estimate
$$\begin{aligned} |G|^{\frac{3}{8}} p^{-\frac{1}{8}}\left( p^{-\frac{1}{8}} \left( \sum_{h\in F} |G_h|^{\frac{3}{4}} \right )^{\frac{1}{2}} +  p^{\frac{1}{16}} \left( \sum_{h\in F} |G_h|^{\frac{1}{2}} \right )^{\frac{1}{2}}\right) \\ \leq \;\;\;
|G|^{\frac{3}{4}}p^{-\frac{1}{8}} + |G|^{\frac{5}{8}}p^{\frac{3}{16}} \;\; \ll  \;\;|G|^{\frac{5}{8}}p^{\frac{3}{16}}\,, \end{aligned}$$
given the above range of $|G|$.

We now go back to the other end of the claim of Corollary \ref{cor:g_hat_bounds2d}, and 
for $ |G| \le p^{\frac{16}{9}}$ use the Stein-Tomas bound  \eqref{f:g_hat_2}.
At the upper end of the latter interval of $|G|$ the Stein-Tomas bound meets the following estimate, for which we use the first bound of \eqref{f:E2} for $|G_h|$ as long as  $|G_h|\leq p^{\frac{26}{21}}$ and otherwise the second bound, but the number of such $h$ is at most $p^{\frac{16}{9}-\frac{26}{21}} < p^{\frac{4}{7}}$ -- in the following calculation we index these slices of $G$ by $i$. 

Substitute these in 
 \eqref{e:g^-energy}, apply the H\"older inequality and pick the dominating term, which comes from the first bound of \eqref{f:E2} -- this yields a bound
  $$\begin{aligned} \;\;\;|G|^{\frac{3}{8}} p^{-\frac{1}{8}}\left( \left( \sum_{h\in F} |G_h|^{\frac{17}{28}} \right )^{\frac{1}{2}}  +  p^{-\frac{1}{8}}\left(\sum_{i=1,\ldots,p^{\frac{4}{7}}} |G_i|^{\frac{3}{4}} \right )^{\frac{1}{2}} +  p^{\frac{1}{16}} \left( \sum_{i=1,\ldots,p^{\frac{4}{7}}} |G_i|^{\frac{1}{2}} \right )^{\frac{1}{2}}\right) \\ \leq \;\;\;
|G|^{\frac{19}{28}} p^{\frac{1}{14}}+|G|^{\frac{3}{4}}p^{-\frac{5}{28}} + |G|^{\frac{5}{8}}p^{\frac{9}{112}} \;\; \ll  \;\;|G|^{\frac{19}{28}} p^{\frac{1}{14}}\,, \end{aligned}$$
in the above range of $|G|$. The latter estimate, naturally, meets estimate \eqref{f:g_hat_2} when $ |G| \sim p^{\frac{16}{9}}$.

Finally, for $p^{\frac{16}{9}} \le |G| \le p^{\frac{47}{21}}$ we repeat the latter calculation, only with the range of $i$, marking slices of $G$ with $|G_h|\geq p^{\frac{26}{21}}$, ranging from $1$ to $|G|p^{-\frac{26}{21}}$.  This yields the estimate
  $$\begin{aligned} |G|^{\frac{19}{28}} p^{\frac{1}{14}}+|G|^{\frac{3}{8}} p^{-\frac{1}{8}}\left(  p^{-\frac{1}{8}}\left(\sum_{i=1,\ldots,|G|p^{-\frac{26}{21}}} |G_i|^{\frac{3}{4}} \right )^{\frac{1}{2}} +  p^{\frac{1}{16}} \left( \sum_{i=1,\ldots,|G|p^{-\frac{26}{21}}} |G_i|^{\frac{1}{2}} \right )^{\frac{1}{2}}\right) \\ \leq \;\;\;
|G|^{\frac{19}{28}} p^{\frac{1}{14}} +|G|^{\frac{7}{8}}p^{-\frac{1}{4}-\frac{13}{84}} + |G|^{\frac{7}{8}}p^{-\frac{1}{16}-\frac{13}{42}}  \;\;\ll  \;\; |G|^{\frac{19}{28}} p^{\frac{1}{14}}+|G|^{\frac{7}{8}}p^{-\frac{125}{336}}\,. \end{aligned}$$

\end{proof}


\begin{proof}[Proof of Theorem \ref{t:rest3}]
The statement of the theorem now follows from the bounds of Corollary \ref{cor:g_hat_bounds2d}, similar to \eqref{long} in the proof of Theorem \ref{t:rest}. 

 We only present a part of the calculation, which yields the worst extension exponent. This is contributed, naturally, by the second range in Corollary \ref{cor:g_hat_bounds2d}, which has come from the Stevens-de Zeeuw Theorem. 

Assume that for some $r \in (1,3/2)$ one has
$$
	\sum_{x\in V} |g(x)|^{r} = 1 \,.
$$
We need to ensure, for the corresponding group of dyadic pieces $g_i$  of $g$ (with  $g(x)\sim 2^{-i}$ on a dyadic piece) that the following estimate holds:

$$
	1\gg \sum_{i :~ p^{\frac{16}{9}} \le 2^{ri} } 2^{-i}  \left( 2^{\frac{19}{28} ri} p^{\frac{1}{14}}+2^{\frac{7}{8} ri}p^{-\frac{125}{336}}\right).
$$
This means $r$ should satisfy $\frac{1}{r} = \frac{19}{28}+ \frac{1}{14}\cdot\frac{9}{16}$ (the second term in the above estimate can be easily seen to play no role), whence $r=\frac{224}{161}.$ Therefore, the dual Fourier extension exponent is $\frac{32}{9}.$ 

This concludes the proof of Theorem \ref{t:rest3}. \end{proof}

\subsection{Proof of Lemma \ref{l:corners}}
We start off with a geometric incidence bound to replace Lemma \ref{lem:Misha+}.
\begin{lemma}
	\label{lem:2d}
	The number of incidences $I(n)$ between $m$ lines and $n$ points in $F^2$ is bounded as \begin{equation}\label{ins} I(n) \ll \left\{
	\begin{array}{lll} (mn)^{\frac{11}{15}} + m+n & for\;\;\;n^{13}m^{-2}\leq p^{15}\,, \\ \hfill \\
	 \frac{mn}{p} +  (mnp)^{\frac{1}{2}}\,.\end{array}\right. \end{equation} \end{lemma}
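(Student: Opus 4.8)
The statement is exactly the incidence theorem of Stevens and de Zeeuw \cite{SdZ}, so the plan is to reproduce their argument rather than invent a new one. The natural route is the \emph{point-plane lifting} trick: incidences between $m$ lines and $n$ points in $F^2$ are converted into point-plane incidences in $F^3$, where Lemma \ref{lem:Misha+} (Rudnev's bound, already available to us) applies. Concretely, I would fix a generic coordinate frame so that no line is vertical; parametrise each line $y = ax+b$ by its pair $(a,b)\in F^2$, and each point $(x,y)$ by the plane $\{(a,b,c): ax - y + b = 0\}$ — wait, more precisely, one lifts points $(u,v)$ to planes in the dual and lines to points, arranging that a point lies on a line iff the corresponding point lies on the corresponding plane. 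To get a genuine improvement over the trivial $O(m\sqrt n + n)$ one must be careful: a direct application gives a bound governed by the maximum number of collinear lifted points, which one controls by a dyadic/weighting argument over how many points each line carries (equivalently, a Cauchy–Schwarz / popularity decomposition on the point multiplicities), combined with the second clause of Lemma \ref{lem:Misha+} that excludes incidences lying on a fixed set of lines.

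In more detail, the steps I would carry out are: (i) reduce to the bipartite ``rich lines'' setting — restrict to lines containing between $j$ and $2j$ points of the point set, so the number of such lines is $m_j \le I/j$ and it suffices to bound $I_j \ll j m_j$; (ii) set up the Elekes-style lift sending each of the $n$ points to a plane in $F^3$ and each incidence-configuration to a point, so that a rich line produces $\sim j$ collinear lifted points and an incidence becomes a point-plane incidence; (iii) apply Lemma \ref{lem:Misha+} with $|Q|\sim$ (number of point-pairs or point–line flags, chosen so $|Q|\le|\Pi|$), $|\Pi|$ the planes, and $k$ the maximum collinearity, which is $\ll \max(j, \text{trivial line count})$; (iv) solve the resulting inequality for $I_j$, getting the main term $(mn)^{11/15}$ after optimising, plus the error terms $m+n$, and the second-clause bound $mn/p + (mnp)^{1/2}$ for the regime where the characteristic term in Lemma \ref{lem:Misha+} dominates; (v) sum over the $O(\log p)$ dyadic scales $j$, absorbing the logarithm into the $\ll$ and checking that the crossover between the two bounds in \eqref{ins} occurs exactly at $n^{13}m^{-2} = p^{15}$.

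The main obstacle — and the reason this is not just a one-line corollary of Rudnev's theorem — is controlling the collinearity parameter $k$ in Lemma \ref{lem:Misha+} after the lift: a naive lift allows many lifted points to be collinear (coming from families of concurrent lines or from lines through a common point), which would kill the bound. The fix is the inclusion of the line-exclusion refinement in the second sentence of Lemma \ref{lem:Misha+}: one identifies the ``bad'' lines in $F^3$ (images of pencils of lines through a point in $F^2$), removes those incidences and bounds them separately by a trivial count, and then $k$ is reduced to the genuine richness parameter $j$. Balancing this exclusion against the dyadic sum, and verifying that the self-paired degenerate contributions ($m+n$) and the characteristic-dominated regime are correctly separated out, is where all the real work lies; everything else is bookkeeping of exponents.
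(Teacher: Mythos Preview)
The paper does not prove Lemma~\ref{lem:2d} at all: the sentence immediately following the statement says it is ``an amalgamation'' of \cite[Theorem~3]{SdZ} (for the first line of \eqref{ins}) and Vinh's theorem \cite[Theorem~3]{Vinh} (for the second line), and leaves it at that. So there is nothing to compare against except the cited sources.

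Your plan for the \emph{first} bound is broadly the right one --- Stevens and de Zeeuw do indeed deduce the $(mn)^{11/15}$ bound by an Elekes-type lift to a point-plane configuration in $F^3$ and an application of Rudnev's theorem (your Lemma~\ref{lem:Misha+}). Your description of the lift is hesitant and not quite right (the ``wait'' mid-sentence is a fair signal): in the actual argument one does not parametrise a single line by a single point and a single point by a single plane; rather one builds a set of $\Theta(n\cdot I)$ points and $\Theta(m\cdot I)$ planes (or a similar asymmetric product), and the collinearity parameter $k$ is controlled not by dyadic decomposition but directly by the incidence count itself. This yields the clean exponent $11/15$ with no logarithmic loss, so step~(v) of your plan (``absorbing the logarithm into the~$\ll$'') is unnecessary and would in fact weaken the statement.

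Where your plan has a genuine gap is the \emph{second} bound $\frac{mn}{p}+(mnp)^{1/2}$. This does \emph{not} fall out of the Rudnev-lifting argument as ``the regime where the characteristic term dominates''; after the lift the sets $Q,\Pi$ have cardinalities that are polynomials in $m,n,I$, and the $|Q||\Pi|/p$ term of Lemma~\ref{lem:Misha+} gives something quite different from $mn/p$. The second line of \eqref{ins} is Vinh's theorem, proved by a completely separate spectral / expander-mixing argument on the point-line incidence graph (this is what the paper means by ``proved just by linear algebra''). You should invoke it as an independent input rather than try to extract it from the lift.
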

Lemma \ref{lem:2d} is an amalgamation of incidence theorems of Stevens and de Zeeuw \cite[Theorem 3]{SdZ} and a well-known theorem of Vinh \cite[Theorem 3]{Vinh}. The former theorem, which contributes the first line in estimate \eqref{ins} is valid for small sets. The latter theorem, proved just by linear algebra, is nontrivial  if both $m,n>p.$

\begin{proof}[Proof of Lemma \ref{l:corners}]We restate equation \eqref{f:inc_d-1} in the proof of Lemma \ref{l:par_energy}, aiming to bound the number of solutions of the equation 
\begin{equation}
(\u x- \u z)\cdot(\u z- \u y)=0:\;\;\u x,\u y, \u z\in \u A\,,
\label{right}\end{equation} where $\u A$ is the projection of $A$ on the $(x_1,x_2)-$plane.  Let us set $|A|=|\u A|=n$.

Equation \eqref{right} is a well-known problem of counting the maximum number of right triangles with vertices in the plane point set $\u A$, which in the real case was given a sharp answer by Pach and Sharir \cite{PS}, by using the Szemer\'edi-Trotter theorem. Here we adapt the argument in order to use the Stevens-de Zeeuw incidence bound -- the first estimate in \eqref{ins} -- instead.

Note that the second bound in \eqref{ins} always provides a universal bound
\begin{equation}\label{uni}
\E(A)\ll \frac{|A|^3}{p} + |A|^2\sqrt{p}.
\end{equation}

We next recast the Stevens-de Zeeuw incidence bound in the usual way, aiming at the cardinality $m_k$ of the set of $k$-rich lines, that is lines, supporting $\geq k$ points of a $n$-point set:
$$
m_k  \ll  \frac{n^{\frac{11}{4}}} {k^{\frac{15}{4}}} + \frac{n}{k} + \frac{n^{\frac{13}{2}}}{p^{\frac{15}{2}}}\,,
$$
The third term in the bound arises as the alternative to the constraint $n^{13}m_k^{-2}\leq p^{15}$ of Lemma \ref{lem:2d} when the Stevens-de Zeeuw theorem may not apply.

Let us for technical purposes loosen this bound by subsuming the last term in the increased second term (clearly, $k\leq p$) as follows:

\begin{equation}\label{k-rich}
m_k\ll  \frac{n^{\frac{11}{4}}} {k^{\frac{15}{4}}} + \frac{n^{\frac{5}{4}}}{k} \qquad for \;\;\; n\leq p^{\frac{26}{21}}\,.
	\end{equation}
This apparently crude step is of little consequence but eases some calculations in the sequel: its positive effect is making the range of $n$ when the next key estimate \eqref{thin} is applicable somewhat wider, while the negative effect is that following estimate \eqref{e:vr} ends up being worse than it may be, yet still better than the desired  \eqref{thin}.

Next we are going to show that the number of nontrivial solutions (that is with $\u x\neq \u z$ and $\u y\neq \u z$) $N$ of equation \eqref{right} satisfies the following bound:
\begin{equation}\label{thin}
for \;\;\; n\leq p^{\frac{26}{21}}, \qquad N\ll n^{2+\frac{3}{7}}.
\end{equation}

Assume $n\leq p^{\frac{26}{21}}$, let us express the quantity $N$  as follows. For $\u z\in A$, define $L_z$ as the set of all the $p+1$ lines in $F^2$ incident to $z$. For any line $l$ in $F^2$, let $n(l)$ be the number of points of $\u A$, supported on $l$ {\em minus 1}. Then
\begin{equation}\label{f:N}
N = \sum_{\u z \in \u A}\, \sum_{l\in L_z} n(l) n(l^\perp),
\end{equation}
where $l^\perp$ is the line perpendicular to $l$.

Let us set up a cut-off value $k_*= n^{\frac{3}{7}}$ of $n(l)$ to be justified. Partition, for every $z$, the lines $l\in L_z$ to poor ones, that is those with $n(l) \leq k_*$, and otherwise rich. 
Accordingly partition $N=N_{poor} + N_{rich}$, where  the term $N_{poor}$ means that at least one of $l,l^\perp$ under summation in \eqref{f:N} is poor, hence the alternative $N_{rich}$ is when both $l,l^\perp$ are rich. Clearly
$$
N_{poor} \leq 2k_* n^2.
$$
Let us now bound $N_{rich}$. Observe that the two terms in estimate \eqref{k-rich} meet when 
 $k\sim n^{\frac{6}{11}}$. Let us call the lines with $n(l) \geq n^{\frac{6}{11}}$ {\em very rich} and partition
 $$N_{rich} = N_{very-rich}+N_{just-rich}\,,$$
 the first term corresponding, for each $\u z$, to the sub-sum, corresponding to the case when one of $l,l^\perp$ is very rich.  Then one can bound $N_{very-rich}$ trivially, using the second term in \eqref{k-rich} and dyadic summation in  $k\geq n^{\frac{6}{11}}$  as
\begin{equation}\label{e:vr}
N_{very-rich}\ll n \sum_{l:\,n(l)\geq n^{\frac{6}{11}}} n(l) \ll n^\frac{9}{4}\log n,
\end{equation}
 which is better than 
 \eqref{thin}. 
 Indeed, given a very-rich line $l$ we count all triangles with vertices $\u x, \u z,\u y$, such that $\u z\in l$ and $\u y$ is any point outside $l$; the two will determine the third vertex $\u x\in l$.  

What is left to consider is the case of the summation in \eqref{f:N} when both  $n^{\frac{3}{7}}\leq n(l),\,n(l^\perp)\leq n^{\frac{6}{11}}$. We apply Cauchy-Schwarz to obtain 
$$
N_{just-rich}  \leq \sum_{\u z \in \u A} \;\;\sum_{l\in L_z:\,p^{\frac{3}{7}}\leq n(l) \leq n^{\frac{6}{11} } } n^2(l) .$$
The expression in the right-hand side counts collinear triples of points in $\u A$ on rich, but not very rich lines. The number of such lines with $n(l)\sim k$, for the range of $n(l)$ in question is bounded by the first term in estimate \eqref{k-rich}.  Multiplication of the latter term by $k^3$ followed by dyadic summation in $k$ yields
$$
N_{just-rich} \ll n^{\frac{11}{4}}  k_*^{-\frac{3}{4}}\,,$$ optimising with $N_{poor}\leq n^2k_*$ justifies the choice of $k_*=n^{\frac{3}{7}}$, and proves \eqref{thin}. Together with the better bound \eqref{e:vr} this completes the proof of Lemma \ref{l:corners}.
\end{proof}

\subsection*{Final remark} Mockenhaupt and Tao conjectured that in $d=3$, the sharp extension exponent from $L^2(\mathcal P,d\sigma)$ should be $3$. However, the strategy in this paper (as well as \cite{MoT}, \cite{Lew}, etc.) can only yield an extension exponent $>\frac{10}{3}$ (which one would get with the full strength Szemer\'edi-Trotter theorem). Indeed, as we have seen, the energy $\E(G_h)$ under summation in the right-hand side of \eqref{e:g^-energy} equals the number of rectangles on the corresponding horizontal slice of the support $G$ of a bounded function $g$ on $F^3$. Choosing $g\equiv 1$ on its support and each horizontal slice of $G$  as a translate or dilate of the Cartesian product of the interval $[1,\ldots,N]$ with itself, the number of rectangles on each slice is $\gg N^4\log N$. An easy calculation with, say, choosing $N\sim p^{\frac{1}{3}}$ and having $p$ such slices of $G$ shows that the minimum of the estimates \eqref{f:g_hat_2}, \eqref{f:g_hat_3} and \eqref{e:g^-energy} is always greater than $p^{\frac{7}{6}}$ times a power of $\log p$. This means that the restriction exponent is $<\frac{10}{7}$, hence the extension exponent from $L^2(\mathcal P,d\sigma)$  is $>\frac{10}{3}.$



\end{document}